\documentclass[11pt]{article}

\usepackage{bbm}
\usepackage{eqnarray}
\usepackage{color}
\usepackage{bm}
\usepackage{amssymb}
\usepackage{float}
\usepackage{subfigure}
\usepackage{amsfonts,amsmath,amsthm,fancyhdr,multirow,hyperref}
\usepackage{booktabs,longtable,authblk}
\usepackage{mathrsfs,hhline}

\usepackage[top=2.5cm, bottom=2.5cm, left=3cm, right=3cm]{geometry}
\newtheorem{theorem}{Theorem}[section]
\newtheorem{proposition}[theorem]{Proposition}
\newtheorem{lemma}[theorem]{Lemma}
\newtheorem{corollary}[theorem]{Corollary}
\theoremstyle{definition}
\newtheorem{problem}[theorem]{Problem}
\newtheorem{definition}[theorem]{Definition}
\theoremstyle{remark}
\newtheorem{remark}[theorem]{Remark}

\newcommand{\R}{\mathbb R}
\newcommand{\C}{\mathbb C}
\newcommand{\N}{\mathbb N}
\newcommand{\B}{\mathcal B}
\newcommand{\norm}[1]{\lVert #1\rVert}
\newcommand{\snorm}[1]{\lVert #1\rVert_{[-1,1]}}
\newcommand{\Ki}{K_{\mathrm i}}
\newcommand{\sgnset}{\{-1,1\}}
\DeclareMathOperator{\sinc}{sinc}
\numberwithin{equation}{section}

\title{Erdős's Robust Polynomial Interpolation Problem: the Optimal Exponential Scale$^\dag$\footnotetext{\dag~Email addresses: jqyang24@m.fudan.edu.cn (J.-Q. Yang)
}
}
\author{Jia-Qi Yang}
\affil{School of Mathematical Sciences and Shanghai Key Laboratory for
	Contemporary Applied Mathematics, Fudan University, Shanghai 200433, China.}

\date{}

\begin{document}
\maketitle
\begin{abstract}
We determine the optimal exponential scale in Erd\H{o}s's robust
polynomial interpolation problem and identify the leading exponential
coefficient for Chebyshev--Lobatto grids. Given $C>0$ and
$0\le\rho<1$, put $H=C/(1-\rho)$. We prove that every configuration
of $n\ge n_0$ nodes in $[-1,1]$ admits sign data for which every
polynomial of degree at most $(1+\epsilon)n$ and uniform norm at most
$C$ has error greater than $\rho$ at more than $\epsilon n$ nodes.
One may choose $\epsilon\ge e^{-A(1+H)}$ and
$n_0\le e^{A(1+H)}$, where $A$ is an absolute constant, and a
matching upper bound shows that the exponential order is optimal.
For the same obstruction restricted to Chebyshev--Lobatto grids,
the optimal parameter $\epsilon_{\mathrm{Ch}}(C,\rho)$ satisfies
$\log(1/\epsilon_{\mathrm{Ch}}(C,\rho))
=\frac{\pi}{2}H+O(\log(H+1))$ as $H\to\infty$.
The proofs combine finite interpolation estimates in Bernstein spaces
with periodic sign constructions and averaging.
We also establish obstructions for random signs and stability under
angular perturbations of the nodes.
\end{abstract}

{\textbf{keywords:} Polynomial interpolation, Bernstein spaces,
interpolation constants, Chebyshev--Lobatto nodes,
Erd\H{o}s problem}

\section{Introduction}\label{sec:introduction}

For a polynomial $P$, write
\[
 \snorm{P}=\max_{-1\le t\le1}|P(t)|.
\]
Erd\H{o}s proposed the following assertion concerning interpolation with
both omitted data and a surplus in the polynomial degree
\cite[p.~72]{Erdos1968}. It is recorded as Erd\H{o}s Problem~1133.

\begin{problem}[Erd\H{o}s's robust interpolation problem]\label{prob:erdos}
For every $C>0$, there exist $\epsilon\in(0,1)$ and $n_0\in\N$ such
that, for every integer $n\ge n_0$ and every choice of distinct nodes
\[
 -1\le x_1<\cdots<x_n\le1,
\]
there is a vector $y=(y_1,\ldots,y_n)\in[-1,1]^n$ for which the
following implication holds for every $P\in\R[X]$:
\begin{equation}\label{eq:erdosproblem}
 \left.
 \begin{gathered}
  \deg P<(1+\epsilon)n,\\
  \#\{i\in\{1,\ldots,n\}:P(x_i)=y_i\}\ge(1-\epsilon)n
 \end{gathered}
 \right\}
 \quad\Longrightarrow\quad \snorm{P}>C.
\end{equation}
\end{problem}

The robustness lies in the simultaneous allowance of extra degree and
omitted data. The interpolating polynomial is not unique, and its set
of fitted nodes may vary. Thus a lower bound for a fixed Lagrange
interpolant does not by itself establish \eqref{eq:erdosproblem}.

A qualitative proof of this assertion is given in the unsigned draft
manuscript \cite{Ulam2026}. It passes to angular coordinates, obtains a
finite obstruction from Beurling's strict interpolation-density theorem,
and places these obstructions on disjoint groups of nodes. Its passage
from finite configurations to an infinite interpolation set uses
translation averaging and stationary point configurations. The argument
does not give a quantitative dependence of the obstruction parameter
on $C$ \cite[Section~6.3]{Ulam2026}.

Our first result determines the exponential order of this parameter,
including its dependence on a prescribed approximation tolerance
$0\le\rho<1$. With $H=C/(1-\rho)$, an obstruction is possible with
$\epsilon\ge e^{-A(1+H)}$ once $n\ge n_0$, where
$n_0\le e^{A(1+H)}$. Conversely, interpolation at Chebyshev--Lobatto
nodes rules out parameters larger than $e^{-\pi(H-3)/2}$ when $H>3$.
Thus the logarithm of the reciprocal of the optimal parameter has
order $H$.

Our second result identifies the leading exponential coefficient for
the full Chebyshev--Lobatto grids. If $\epsilon_{\mathrm{Ch}}(C,\rho)$
denotes the optimal parameter for this prescribed family, then
\[
 \log\frac1{\epsilon_{\mathrm{Ch}}(C,\rho)}
 =\frac\pi2 H+O(\log(H+1)).
\]
For the lower bound, we construct a periodic sign sequence from a
two-point evaluation functional on trigonometric polynomials. Averaging
over its periods controls arbitrary exceptional samples and reduces
the obstruction to a finite interpolation identity. A localized
cardinal formula gives the matching upper bound. This sharp coefficient
is proved for the Chebyshev grids; the corresponding coefficient for
arbitrary node configurations is not determined here.

The quantitative input for arbitrary nodes is the finite
Bernstein-space estimate of Olevskii and Ulanovskii
\cite[Proposition~4.2]{OU2018}. Their work already establishes
logarithmic growth of interpolation constants near critical density and
an exponential density gap for interpolation sets with bounded
interpolation constant \cite[Theorem~1(ii) and Corollary~5.1(ii)]{OU2018}.
We apply the finite estimate to bound the size of a local obstruction;
convex separation then produces sign data at the prescribed tolerance.
The angular reduction and grouping principle are those of
\cite{Ulam2026}. The quantitative bounds, their comparison with
Chebyshev interpolation, and the sharp coefficient on the full grids
are the main conclusions of this paper. The sign and tolerance
refinements already follow qualitatively from the earlier finite
obstruction.

Section~\ref{sec:mainresults} states the main results.
Sections~\ref{sec:bernstein} and~\ref{sec:counting} prove the
arbitrary-node bound. Section~\ref{sec:upper} gives the Chebyshev upper
construction, and Section~\ref{sec:sharp} proves the matching leading
coefficient. Section~\ref{sec:consequences} treats random signs and
perturbations, and Section~\ref{sec:remarks} records further questions.
The kernel estimates and an alternative qualitative proof by
localization are placed in Appendices~\ref{sec:cardinalappendix}
and~\ref{sec:appendix}.

\section{Main results}\label{sec:mainresults}

\subsection{Notation and conventions}

We take $\N=\{1,2,\ldots\}$ and write $[n]=\{1,\ldots,n\}$.
All logarithms are natural. A node configuration is an indexed vector
$x=(x_1,\ldots,x_n)\in[-1,1]^n$; unlike the distinct-node formulation
of Problem~\ref{prob:erdos}, our results allow repeated nodes. Counts
always refer to indices, including when some node locations coincide.
Unless stated otherwise, polynomials have complex coefficients.

For $x\in[-1,1]^n$, $y\in[-1,1]^n$, and $\rho\ge0$, define
\begin{equation}\label{eq:errordef}
 E_\rho(P;x,y)
 :=\#\{i\in[n]:|P(x_i)-y_i|>\rho\}.
\end{equation}
Thus an error equal to $\rho$ is not counted. For $D\ge0$ and $C>0$,
put
\begin{equation}\label{eq:polynomialclass}
 \mathcal P_D(C)
 :=\{P\in\C[X]:\deg P\le D,\ \snorm{P}\le C\}.
\end{equation}
For a nonintegral $D$, the degree condition means
$\deg P\le\lfloor D\rfloor$. We use $\deg0=-\infty$, so the zero
polynomial belongs to every class in \eqref{eq:polynomialclass}.
A constant is called absolute if it is independent of all nodes and
all displayed parameters.

\subsection{A quantitative robust obstruction}

\begin{theorem}[Quantitative robust obstruction]\label{thm:main}
There is an absolute constant $A>0$ such that, for every $C>0$ and
$0\le\rho<1$, one can choose $\epsilon\in(0,1)$ and $n_0\in\N$
satisfying
\begin{equation}\label{eq:mainparameters}
 \epsilon\ge\exp\!\left[-A\left(1+\frac{C}{1-\rho}\right)\right],
 \qquad
 n_0\le\exp\!\left[A\left(1+\frac{C}{1-\rho}\right)\right],
\end{equation}
such that, for every $n\ge n_0$ and every $x\in[-1,1]^n$, there exists
$y\in\sgnset^n$ with
\begin{equation}\label{eq:mainobstruction}
 E_\rho(P;x,y)>\epsilon n
 \qquad\text{for every }P\in\mathcal P_{(1+\epsilon)n}(C).
\end{equation}
\end{theorem}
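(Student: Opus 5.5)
The plan follows the route sketched in the introduction. We pass to angular coordinates, obtain a finite local obstruction in Bernstein spaces with quantitative size, convert it to sign data by convex separation, and place these obstructions on disjoint groups of nodes.

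\emph{Angular reduction.} Write $x_i=\cos\theta_i$ with $\theta_i\in[0,\pi]$. Then $P\in\mathcal P_D(C)$ becomes the even trigonometric polynomial $T(\theta)=P(\cos\theta)$ of degree at most $D$ with $\sup|T|\le C$. After rescaling $\theta=\pi s/n$, the $n$ points $s_i=n\theta_i/\pi$ lie in $[0,n]$, so they have average density $1$ in $[-n,n]$ after reflection. Meanwhile $T(\pi s/n)$ has exponential type $\pi D/n\le\pi(1+\epsilon)$. By pigeonhole, among the disjoint windows of length $L$ partitioning $[0,n]$, at least a fixed proportion contain at least about $L(1-\epsilon')$ nodes. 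Otherwise the density is deficient, and then an averaging argument still yields many windows of density at least $1$.

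\emph{Local obstruction.} Fix a window $I$ of length $L$ containing $m\ge(1+\delta_0)L\cdot\frac{1}{1+\epsilon}$-many points, i.e.\ at density slightly above the type. I will use \cite[Proposition~4.2]{OU2018} in the following form. For functions of type $\pi(1+\epsilon)$ bounded by $C$ on $\R$, the points of $I$ cannot carry all sign patterns to within $\rho$ once $L\ge e^{A(1+H)}$; more precisely, the interpolation constant for $m$ points in a window of length $L$ at density $\ge 1$ grows at least like $c\log L$. Applying this with tolerance, set $K=\{(f(s_j))_{j\in I}: f\in B_{\pi(1+\epsilon)},\ \|f\|_\infty\le C\}$. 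This is a convex, symmetric, compact set. The finite estimate says that $K+\rho B_\infty$ does not contain the cube $[-1,1]^m$, and moreover that it misses a vertex $y^I\in\sgnset^m$ by a margin. Convex separation gives a vector $w$ with $\langle w,y^I\rangle>\sup_{K}\langle w,\cdot\rangle+\rho\|w\|_1$. The key robustness step is to require that errors $>\rho$ at any set of at most $\eta m$ indices still fail to close this gap, where $\eta=e^{-A(1+H)}$. I expect to get this by normalizing $\|w\|_\infty$ relative to $\|w\|_1$; the finite estimate gives spread-out $w$ if we first pass to a subconfiguration of well-separated points. Consequently every admissible $f$ must have more than $\eta m$ errors exceeding $\rho$ in $I$. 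This is the main obstacle: one must make the dependence on $H=C/(1-\rho)$ exponential. The gap $1-\rho$ enters by requiring a local interpolation constant exceeding $H$, and by the logarithmic growth in \cite{OU2018} this forces $L\approx e^{AH}$.

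\emph{Assembling.} Define $y$ by concatenating the $y^I$ on the good windows and setting arbitrary signs elsewhere. Restrict $T$ to each window; its restriction is an admissible $f$ up to the harmless passage from trigonometric polynomials to Bernstein functions, which is standard after periodization. Each good window then contributes more than $\eta m\gtrsim \eta L$ errors. Since the good windows number $\gtrsim n/L$, the total exceeds $c\eta n$. Take $\epsilon=c\eta/2$, which also absorbs the degree surplus into the density margin. Take $n_0$ a multiple of $L$, namely $n_0\le e^{A(1+H)}$. Adjusting $A$ then gives \eqref{eq:mainparameters} and \eqref{eq:mainobstruction}.
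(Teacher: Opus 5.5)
Your ingredients match the paper's: the angular substitution, the Olevskii--Ulanovskii estimate, and convex separation with $H=C/(1-\rho)$ to pass to sign data at tolerance $\rho$. The step that converts local obstructions into more than $\epsilon n$ errors, however, is not established, and as written it fails.

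First, the ``robustness'' step, which you yourself flag, has no support. A vector $w$ produced by convex separation carries no control on $\norm{w}_\infty/\norm{w}_1$. For two nearly coincident nodes it may well be $e_i-e_j$, and then it forces only one error. Theorem~\ref{thm:OU} is a lower bound for an interpolation constant, not a structural statement about separating functionals. Passing to well-separated subconfigurations is neither justified nor needed, since that estimate is independent of node separation. What separation actually yields is one forced error per obstructed set, as in Lemma~\ref{lem:signs}.

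Second, the pigeonhole claim that a fixed proportion of the windows contain $\gtrsim L(1-\epsilon')$ nodes is false. The $s_i$ have total density exactly one in $[0,n]$, yet they may cluster, say all in one window. So ``otherwise the density is deficient'' does not follow, and ``the good windows number $\gtrsim n/L$'' fails. Your density bookkeeping is also inconsistent: pigeonhole gives windows of density about $1-\epsilon'$, while the local step asks for density above that of the type $\pi(1+\epsilon)$. What is really needed is an obstruction at slightly subcritical density.

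The missing idea is grouping the nodes into disjoint obstructed blocks. The paper rescales by $D$ itself, setting $t_i=(D/\pi)\arccos x_i\in[0,D]$, so that $P(\cos(\pi z/D))\in\B_\pi$. Lemma~\ref{lem:finite}, via the dilation $\lambda_k=k/(k+2)$ in Theorem~\ref{thm:OU}, shows that \emph{any} $k$ indexed points in an interval of length $k+1$ are obstructed, with $k\le e^{A_0(1+H)}$. These points have density $k/(k+1)<1$.

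One then cuts $[0,D]$ into $N=\lceil D/(k+1)\rceil$ intervals. The $n_j$ nodes of the $j$th interval are split into $\lfloor n_j/k\rfloor$ disjoint groups of size $k$, each carrying its own signs and forcing one error. Hence $E_\rho\ge q\ge (n-(k-1)N)/k\ge n/k-D/(k+1)-1$. Clustering is harmless here, because a crowded interval just yields many groups. Taking $D=(1+\epsilon)n$, $\epsilon=1/(8k^2)$ and $n\ge 4k^2$ gives $q>\epsilon n$.

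Your window framework can be repaired the same way: split each window's nodes into such blocks instead of seeking $\eta m$ errors from a single functional. As proposed, though, there is no valid mechanism producing a linear number of errors.
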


Equivalently, these data satisfy
\begin{equation}\label{eq:mainimplication}
 \left.
 \begin{gathered}
  \deg P\le(1+\epsilon)n,\\
  \#\{i\in[n]:|P(x_i)-y_i|\le\rho\}\ge(1-\epsilon)n
 \end{gathered}
 \right\}
 \quad\Longrightarrow\quad\snorm{P}>C.
\end{equation}
At $\rho=0$, this implies Problem~\ref{prob:erdos}, with sign data,
complex polynomials, and possibly repeated nodes. The non-strict degree
bound in \eqref{eq:mainimplication} is also slightly stronger than the
strict degree bound in \eqref{eq:erdosproblem}.

\subsection{The optimal obstruction parameter}

\begin{definition}\label{def:optimal}
Fix $C>0$ and $0\le\rho<1$. A number $\epsilon\in(0,1)$ is
\emph{admissible} if the following assertion holds:
\begin{equation}\label{eq:admissible}
 \begin{aligned}
 &\exists n_0\in\N\quad\forall n\in\N,\ n\ge n_0
       \quad\forall x\in[-1,1]^n\\
 &\qquad\exists y\in\sgnset^n\quad
       \forall P\in\mathcal P_{(1+\epsilon)n}(C),
       \qquad E_\rho(P;x,y)>\epsilon n.
 \end{aligned}
\end{equation}
Define
\begin{equation}\label{eq:optimaldef}
 \epsilon_*(C,\rho)
 :=\sup\{\epsilon\in(0,1):\epsilon\text{ is admissible}\}.
\end{equation}
No quantitative restriction on $n_0$ is imposed in this definition.
\end{definition}

Theorem~\ref{thm:main} shows that the set in
\eqref{eq:optimaldef} is nonempty. It is downward closed: if $\epsilon$
is admissible and $0<\epsilon'<\epsilon$, the same sign data work for
$\epsilon'$, since the polynomial class and the required error count
both decrease. No assertion that the supremum is attained is needed.

\begin{theorem}[Optimal exponential order]\label{thm:order}
There is an absolute constant $A>0$ such that, for every $C>0$ and
$0\le\rho<1$, with $H=C/(1-\rho)$, one has
\begin{equation}\label{eq:order}
 e^{-A(1+H)}\le\epsilon_*(C,\rho).
\end{equation}
If $H>3$, then
\begin{equation}\label{eq:orderupper}
 \epsilon_*(C,\rho)\le\exp\!\left[\frac\pi2(3-H)\right].
\end{equation}
The upper bound also holds for the supremum obtained by allowing
$y\in[-1,1]^n$ in \eqref{eq:admissible}, instead of requiring sign data.
\end{theorem}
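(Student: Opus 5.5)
The lower bound \eqref{eq:order} should follow directly from Theorem~\ref{thm:main}. For given $C$ and $\rho$, that theorem supplies some $\epsilon\ge\exp[-A(1+H)]$ and some $n_0$ for which \eqref{eq:admissible} holds. Hence this $\epsilon$ is admissible, and $\epsilon_*(C,\rho)\ge\epsilon$. Nothing further is needed there; all the substance of Theorem~\ref{thm:order} is in the upper bound \eqref{eq:orderupper}.

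For \eqref{eq:orderupper} I will show that no $\epsilon>\exp[\frac\pi2(3-H)]$ is admissible, even with $y\in[-1,1]^n$ allowed. Since the admissible set is downward closed, it suffices to exhibit, for every such $\epsilon$ and infinitely many $n$, a single configuration $x$ that defeats every choice of data. I take the Chebyshev--Lobatto grid $x_j=\cos(\pi j/(n-1))$, $j=0,\dots,n-1$.

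For each $y\in[-1,1]^n$ I will construct a polynomial $Q_y$ with the following properties:
\[
\deg Q_y\le(1+\epsilon)n,\qquad Q_y(x_j)=y_j\text{ for all }j,\qquad \snorm{Q_y}\le \Lambda\,\max_j|y_j|,
\]
where $\Lambda\le \frac2\pi\log\frac1\epsilon+3$. I then set $P=(1-\rho)Q_y$. This gives $|P(x_j)-y_j|=\rho|y_j|\le\rho$ at every node, so $E_\rho(P;x,y)=0\le\epsilon n$. It also gives $\snorm{P}\le(1-\rho)\Lambda$. If $\epsilon>e^{\frac\pi2(3-H)}$, then $\frac2\pi\log\frac1\epsilon+3<H$, so $\snorm P\le (1-\rho)H=C$ and $P\in\mathcal P_{(1+\epsilon)n}(C)$. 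This violates \eqref{eq:admissible} for every $n$, so $\epsilon$ is not admissible.

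The construction of $Q_y$ passes to the angle $\theta$ with $t=\cos\theta$. There the nodes are the equispaced points $\theta_j=\pi j/(n-1)$, and the data extend to an even sequence on $2(n-1)$ equispaced points of the circle. I will use a discrete de la Vallée Poussin (delayed-mean) interpolation operator. Its frequency multiplier equals $1$ up to frequency $n-1$ and then decays linearly to $0$ over a band of width about $\epsilon n$, so the cosine degree is at most $(1+\epsilon)n$. The multiplier is symmetric about the Nyquist frequency $n-1$ in the standard way. By the aliasing identity for equispaced samples, this symmetry makes the operator reproduce the samples exactly; this is the known interpolatory property of discrete de la Vallée Poussin means. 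The result is an even trigonometric polynomial, hence a polynomial in $t=\cos\theta$, with $Q_y(x_j)=y_j$.

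The remaining, and main, step is the bound on the discrete Lebesgue constant $\Lambda=\max_\theta\sum_j|K(\theta-\theta_j)|$. The kernel $K$ is the sampled Fej\'er-difference kernel. I would split the sum into two ranges. In the near range $|\theta-\theta_j|\lesssim 1/n$, the bound $|K|\le1$-type estimates contribute $O(1)$. In the intermediate range $1/n\lesssim|\theta-\theta_j|\lesssim 1/(\epsilon n)$, $|K|$ behaves like a Dirichlet kernel, $\approx\frac{1}{n|\sin((\theta-\theta_j)/2)|}$ times a bounded oscillating factor. Summing this over both sides gives $\frac2\pi\log\frac1\epsilon+O(1)$. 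Beyond $1/(\epsilon n)$, the second-difference (Fej\'er) decay $\lesssim 1/(\epsilon n^2(\theta-\theta_j)^2)$ gives a convergent tail of size $O(1)$. Getting the leading constant exactly $\frac2\pi$ and the additive constant $\le3$ uniformly in $n$ is the delicate part. I would try comparing the sum with the integral $\int|\sin u|/u\,du$ and using the exact closed form of the discrete Fej\'er sums. If the constant $3$ does not come out cleanly, I would absorb the excess by taking $n$ large, since only infinitely many $n$ are needed.
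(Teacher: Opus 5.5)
Your plan is the paper's: the lower bound comes from Theorem~\ref{thm:main}, and the upper bound from exact interpolation on the Chebyshev--Lobatto grid, rescaled by $1-\rho$. There is a genuine gap, though. The one step carrying all the content, the bound $\Lambda\le\frac2\pi\log\frac1\epsilon+3$, is left unproved.

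Both constants matter here. A bound $\frac{c}{\pi}\log\frac1\epsilon+c'$ yields only $\epsilon_*\le\exp[\frac{\pi}{c}(c'-H)]$. To get \eqref{eq:orderupper} for all $H>3$ you need the leading coefficient $\frac2\pi$ and an additive constant at most $3$; an unspecified $O(1)$ proves only the order \eqref{eq:exponentialorder}.

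Your sketch has three problems beyond leaving the constant open:
\begin{itemize}
\item Your middle-range envelope is off by a factor two. At $2m$ equispaced points ($m=n-1$) the cardinal factor is $\frac{\sin(mt)\cot(t/2)}{2m}$, of size at most $\frac1{md}$. Your $\frac{1}{n|\sin(d/2)|}\approx\frac2{nd}$, summed over nodes spaced $\pi/m$ on both sides, would give $\frac4\pi\log\frac1\epsilon$.
\item The fallback of taking $n$ large does not work. Large $n$ removes only discretization effects, such as $\lceil\epsilon m\rceil$ versus $\epsilon m$. It cannot lower the large-$n$ value of the Lebesgue constant, which depends on $\epsilon$ alone and is exactly what must be bounded.
\item A multiplier equal to $1$ up to frequency $n-1$ cannot interpolate exactly. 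Aliasing on $2m$ points forces $\mu(m+s)+\mu(m-s)=1$. So the plateau ends at $m-r$, the ramp passes through $\frac12$ at $m$, and the degree is $m+r$.
\end{itemize}

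The estimate closes easily within your approach. The symmetric de la Vall\'ee Poussin kernel of degree $m+r$ factors as
\[
K(t)=\frac{\sin(mt)\cot(t/2)}{2m}\cdot\frac{\sin((2r+1)t/2)}{(2r+1)\sin(t/2)},
\]
a cardinal kernel times a normalized Dirichlet kernel. Interpolation is immediate, because the second factor equals $1$ at $t=0$. Let $d$ be the distance from $t$ to $2\pi\mathbb Z$. The two factors are bounded by $\min\{1,\frac1{md}\}$ and $\min\{1,\frac{\pi}{(2r+1)d}\}$. For fixed $t$, each shell $\ell\pi/m\le d<(\ell+1)\pi/m$ contains at most two nodes. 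Hence, with $A'=\frac m{2r+1}\ge1$,
\[
 \Lambda\le2+\frac2\pi\sum_{\ell\ge1}\frac1\ell\min\Bigl\{1,\frac{A'}{\ell}\Bigr\}
 \le2+\frac4\pi+\frac2\pi\log A'.
\]
Choose $r=\lceil\epsilon m\rceil$. Then the degree is less than $(1+\epsilon)n$ and $A'<\frac1{2\epsilon}$, so $\Lambda<2.84+\frac2\pi\log\frac1\epsilon$ for every $n$. When $A'<1$, the cruder bound $2+\frac\pi3\min\{1,\frac1{2\epsilon}\}$, from $\sum_\ell A'/\ell^2$, is still below $3+\frac2\pi\log\frac1\epsilon$.

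The paper does the same computation with a squared Fej\'er window $\bigl(\frac{\sin((r+1)t/2)}{(r+1)\sin(t/2)}\bigr)^2$ (Proposition~\ref{prop:chebyshev}, Appendix~\ref{sec:cardinalappendix}). Its quadratic decay halves the tail, giving $2+\frac3\pi+\frac2\pi\log\frac m{r+1}$ with $r=\lfloor\epsilon m\rfloor$. Your Dirichlet window loses $\frac1\pi$ in the tail but gains $\frac2\pi\log2$ from its effective width $2r+1$. Both give the constant $3$ uniformly in $n$, so no passage to large $n$ is needed.
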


In particular,
\begin{equation}\label{eq:exponentialorder}
 \log\frac1{\epsilon_*(C,\rho)}\asymp\frac{C}{1-\rho}
 \qquad\text{as }\frac{C}{1-\rho}\longrightarrow\infty,
\end{equation}
where the comparison constants are absolute. This determines the
exponential order, not the leading coefficient in the exponent.

\subsection{The sharp exponential constant on Chebyshev grids}

For $n\ge2$, let
\begin{equation}\label{eq:fullgrid}
 x^{(n)}=(x_1^{(n)},\ldots,x_n^{(n)}),\qquad
 x_i^{(n)}=\cos\frac{\pi(i-1)}{n-1}.
\end{equation}
Define $\epsilon_{\mathrm{Ch}}(C,\rho)$ by
Definition~\ref{def:optimal}, with the quantifier over all
$x\in[-1,1]^n$ replaced by the single configuration $x^{(n)}$.
Thus the signs may depend on $n$, and they must force more than
$\epsilon n$ errors for every polynomial in
$\mathcal P_{(1+\epsilon)n}(C)$. As before, only the existence of a
threshold $n_0$ is required. Directly from the definitions,
\begin{equation}\label{eq:parametercomparison}
 \epsilon_*(C,\rho)\le\epsilon_{\mathrm{Ch}}(C,\rho).
\end{equation}

\begin{theorem}[Sharp exponential constant on Chebyshev grids]
\label{thm:sharp}
Let $C>0$, $0\le\rho<1$, and $H=C/(1-\rho)$. Then
\begin{equation}\label{eq:sharplower}
 \epsilon_{\mathrm{Ch}}(C,\rho)
 \ge\frac{e^{-\pi/2}}{4(H+1)}e^{-\pi H/2}.
\end{equation}
For $H>3$, one also has
\begin{equation}\label{eq:sharpupper}
 \epsilon_{\mathrm{Ch}}(C,\rho)\le e^{-\pi(H-3)/2}.
\end{equation}
Consequently,
\begin{equation}\label{eq:sharprate}
 \log\frac1{\epsilon_{\mathrm{Ch}}(C,\rho)}
 =\frac\pi2 H+O(\log(H+1))\qquad(H\to\infty),
\end{equation}
with an absolute implied constant. In particular, for each fixed
$0\le\rho<1$,
\begin{equation}\label{eq:sharplimit}
 \lim_{C\to\infty}\frac{1-\rho}{C}
       \log\frac1{\epsilon_{\mathrm{Ch}}(C,\rho)}=\frac\pi2.
\end{equation}
\end{theorem}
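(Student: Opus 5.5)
The plan has three parts: the upper bound \eqref{eq:sharpupper}, the lower bound \eqref{eq:sharplower}, and then the limits \eqref{eq:sharprate} and \eqref{eq:sharplimit}, which are pure arithmetic. Indeed, taking logarithms in \eqref{eq:sharplower} gives
\[
\log(1/\epsilon_{\mathrm{Ch}}) \le \tfrac{\pi}{2}H + \tfrac{\pi}{2} + \log 4 + \log(H+1),
\]
while \eqref{eq:sharpupper} gives $\log(1/\epsilon_{\mathrm{Ch}}) \ge \tfrac{\pi}{2}H - \tfrac{3\pi}{2}$. Dividing by $H$ and letting $C\to\infty$ then yields \eqref{eq:sharplimit}.

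\emph{Upper bound.} I would proceed as in the Chebyshev part of Theorem~\ref{thm:order}. Set $\theta_i=\pi(i-1)/(n-1)$ and fix any $\epsilon > e^{-\pi(H-3)/2}$ and any signs $y$. The goal is a polynomial $P\in\mathcal P_{(1+\epsilon)n}(C)$ with $E_\rho(P;x^{(n)},y)\le\epsilon n$. Write $P(\cos\theta)=\rho' T(\theta)$, where $T$ is an even trigonometric polynomial of degree at most $(1+\epsilon)n$ that matches $y_i$ at \emph{every} node $\theta_i$. Take $T$ to be a localized cardinal interpolant, namely the sum
\[
T(\theta)=\sum_i y_i\,K(\theta-\theta_i)
\]
(symmetrized under $\theta\mapsto-\theta$). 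The kernel is the trigonometric analogue of $\sinc(N\theta)$ multiplied by a bandlimited taper of extra bandwidth $\epsilon n$. Choosing that taper as a $\sinc$ power, or via the Beurling--Selberg type kernel from Appendix~\ref{sec:cardinalappendix}, gives Lebesgue constant $\sup_\theta\sum_i|K(\theta-\theta_i)|\le\frac{2}{\pi}\log(1/\epsilon)+3$. This is the Bernstein-space fact that oversampling by a factor $1+\epsilon$ yields Lebesgue constant $\frac2\pi\log\frac1\epsilon+O(1)$.

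Next, scale by $(1-\rho)$ and use the tolerance. The polynomial $(1-\rho)T$ satisfies $|(1-\rho)T(\theta_i)-y_i|=\rho$ at every node, so there are zero errors, and its norm is at most $(1-\rho)(\frac2\pi\log\frac1\epsilon+3)$. Substituting the assumed lower bound on $\epsilon$ would show this is $\le C$, provided the constants are arranged so that the relevant inequality is $H\ge \frac2\pi\log\frac1\epsilon+3$. That condition is exactly $\epsilon\ge e^{-\pi(H-3)/2}$. I expect this computation to have already appeared in Section~\ref{sec:upper}, so I would cite it and not redo it.

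\emph{Lower bound.} Set $\epsilon_0=\frac{e^{-\pi/2}}{4(H+1)}e^{-\pi H/2}$. Following the outline in the introduction, I would pick a period $m\approx 1/\epsilon_0$ in the angular index, with $m$ even and of order $e^{\pi H/2}(H+1)$. Put $y_i=\operatorname{sgn}\cos(\pi(i-\tfrac12)/m)$-type signs, i.e.\ blocks of $+1$ and $-1$ of length $m$, which is the extremal sequence for the two-point functional. The key identity I want is a finite quadrature/duality statement: there are weights $w_i$ on the nodes in one period, with $\sum|w_i|=1$ and $\sum_i w_i y_i=1$, such that for every trigonometric polynomial $T$ of degree at most $(1+\epsilon_0)n$,
\[
\Bigl|\sum_i w_iT(\theta_i)\Bigr|\le \frac{\|T\|_\infty}{\Lambda}, \qquad \Lambda\approx\frac2\pi\log m - O(1).
\]
This comes from the two-point evaluation functional, whose dual norm grows like $\frac{2}{\pi}\log(\text{number of nodes per period})$, i.e.\ the classical lower bound for Lebesgue constants. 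Now suppose $P$ has at most $\epsilon_0 n$ errors. Average the identity $\sum w_i(y_i-P(x_i))=1-\sum w_iP(x_i)$ over all $\approx n/m$ translated periods. The non-error nodes contribute at most $\rho$. The error nodes number at most $\epsilon_0 n$, and each carries weight at most $\max|w_i|$, contributing at most $\epsilon_0 n\cdot(1+C)\max|w_i|$ after averaging. Since $\max|w_i|\lesssim 1/\log m$ and there are $n/m$ periods, this term is $O(\epsilon_0 m(1+C)/\log m)$. The resulting inequality is $1-\rho-(\text{error term})\le C/\Lambda$, and with $\log m=\frac\pi2 H+\frac\pi2+\log(4(H+1))$ it leads to a contradiction.

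\textbf{Main obstacle.} The hard step is constructing the weights $w_i$ with a \emph{sharp} constant $\frac2\pi$ in $\Lambda$ while keeping $\max|w_i|$ small. The degree exceeds the node count by $\epsilon_0 n$, so the functional must annihilate the extra band only approximately. I would handle this by taking $w$ supported at distance $\sim m$ from a sign change and using the $1/k$ Hilbert-type weights truncated at $m$. The endpoint nodes $\theta=0,\pi$ (the Lobatto reflection) would be treated by discarding $O(m)$ boundary indices, which is absorbed once $n\ge n_0$.
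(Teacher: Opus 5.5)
Your upper bound is the paper's argument (Corollary~\ref{cor:upper} with $\alpha=\epsilon$, scaled by $1-\rho$), and the final arithmetic giving \eqref{eq:sharprate} and \eqref{eq:sharplimit} is right. The lower bound \eqref{eq:sharplower}, however, has a genuine gap, and two of its concrete choices fail.

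First, the block signs $\operatorname{sgn}\cos(\pi(i-\tfrac12)/m)$ are not extremal for the two-point functional. Since $L_k(\tfrac12-j)=(-1)^j\cot(\pi(\tfrac12-j)/(2k))/(2k)$, the signs $\operatorname{sgn}(w_j)$ of Lemma~\ref{lem:cyclic} are $(-1)^j$ times a block pattern like yours. Your own normalization $\sum_iw_iy_i=1=\sum_i|w_i|$ already forces $y=\operatorname{sgn}w$. For block data, consider the cardinal formula of Appendix~\ref{sec:cardinalappendix}, where the $j$th kernel equals $(-1)^j\sin(mt)\cot\frac{t-\theta_j}2\,W_r(t-\theta_j)/(2m)$ with the appendix's $m=n-1$. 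The coefficients $(-1)^jv_j$ alternate inside each block, so summation by parts bounds the interpolant by an absolute constant. Once $H$ exceeds that constant, $(1-\rho)$ times this interpolant fits every node, and your signs force no errors at all.

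Second, the parameters do not balance. With $p$ nodes per period and unnormalized weights, the averaged inequality reads $H\ge W-(H+1)\epsilon_0p\norm{w}_{\ell^\infty}$. Your choice $p\approx1/\epsilon_0$ makes the loss of order $H$, while $W=\frac2\pi\log p+O(1)=H+O(\log(H+1))$, so no contradiction results. The paper instead takes period $2k$ with $k\approx e^{\pi(H+1)/2}$ and $\epsilon=1/(2k(H+1))$, so the loss is $k\epsilon(H+1)=\tfrac12$. The factor $1/(H+1)$ in \eqref{eq:sharplower} comes from shrinking $\epsilon$ below the inverse period, not from enlarging the period. Your bound $(1+C)|w_i|$ per bad sample also loses. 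The paper uses $s_jP\ge1-\rho$ at good samples and $s_jP\ge-C$ at bad ones, which gives the coefficient $C+1-\rho$, i.e.\ $H+1$. Yours gives $H+1/(1-\rho)$, which is not uniform in $\rho$.

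More fundamentally, the step you flag as the main obstacle is where the proof actually lives, and you do not show that truncated Hilbert weights give the constant $\frac2\pi$ with a controlled loss. The identity $\sum_jw_jF(j)=\frac12(F(\tfrac12)-F(k+\tfrac12))$ holds only on $\mathcal T_k$, because it uses both $2k$-periodicity and the exact band limit. For a non-periodic $T$ of degree $(1+\epsilon_0)n$, the midpoint values also depend on samples outside the window. So no single-window inequality with $\Lambda=\frac2\pi\log m-O(1)$ follows from that identity.

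The paper needs no such inequality. Put $f_n(t)=P_n(\cos(\pi t/(n-1)))$; the Lobatto nodes become the integers $0,\dots,n-1$, so nothing has to be discarded at the ends. Then average the \emph{functions}, not just the identity: $g_n=M_n^{-1}\sum_{\ell<M_n}f_n(\cdot+2k\ell)$ with $M_n=\lfloor n/(2k)\rfloor$. Do the weighted error count on $g_n$, using $\sum_jB_{n,j}\le\epsilon n$, and pass to a normal-family limit $g$. Telescoping makes $g$ exactly $2k$-periodic. Shifting the Fourier integrals vertically shows that its frequencies $\ell\pi/k$ satisfy $|\ell|\le k(1+\epsilon)$, so $\epsilon k<1$ gives $g\in\mathcal T_k$. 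The two-point identity then applies exactly to $g$, and the degree surplus costs nothing. This is the idea your proposal is missing.
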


The estimate \eqref{eq:sharprate} is uniform as $H\to\infty$, including
when $\rho$ varies. It determines the leading exponential coefficient,
but not an asymptotic equivalent for $\epsilon_{\mathrm{Ch}}$.
The proof of \eqref{eq:sharplower} gives explicit periodic signs and an
explicit admissible parameter, but obtains the associated threshold
$n_0$ by compactness. It does not give the quantitative threshold in
Theorem~\ref{thm:main} for the parameter in \eqref{eq:sharplower}.
Furthermore, \eqref{eq:parametercomparison} does not transfer the new
lower bound to $\epsilon_*$.

\section{Finite obstructions in Bernstein spaces}\label{sec:bernstein}

\subsection{Bernstein spaces and interpolation constants}

For $\sigma>0$, let $\B_\sigma$ be the space of entire functions of
exponential type at most $\sigma$ that are bounded on $\R$, with norm
$\norm{f}_\infty=\sup_{t\in\R}|f(t)|$. Here exponential type at most
$\sigma$ means that, for every $\delta>0$, there is $M_\delta<\infty$
such that $|f(z)|\le M_\delta e^{(\sigma+\delta)|z|}$ for all $z\in\C$.
Let $\B_\sigma^{\R}$ denote the real subspace of functions real-valued
on $\R$. We use the standard vertical growth estimate
\begin{equation}\label{eq:growth}
 |f(x+iy)|\le\norm{f}_\infty e^{\sigma|y|}
 \qquad(x,y\in\R),
\end{equation}
and the completeness of $\B_\sigma$; see, for example,
\cite{OU2018,OrtegaCerdaSeip1999}. In particular, a series whose
Bernstein norms are summable converges in $\B_\sigma$ and locally
uniformly in $\C$.
The real symmetrization
\[
 f^{\R}(z)=\frac{f(z)+\overline{f(\bar z)}}2
\]
does not increase the norm or the exponential type, and does not increase
errors against real sample values. Cauchy's formula on a circle of radius
$1/\sigma$, together with \eqref{eq:growth}, gives the sufficient estimate
\begin{equation}\label{eq:derivative}
 \norm{f'}_\infty\le e\sigma\norm{f}_\infty.
\end{equation}

For a finite set $\Gamma\subset\R$, define its complex interpolation
constant by
\[
 \Ki(\Gamma,\B_\sigma)
 =\sup_{\norm{v}_{\ell^\infty(\Gamma)}\le1}
   \inf\{\norm{f}_\infty:f\in\B_\sigma,\ f|_\Gamma=v\}.
\]
The data $v$ are complex-valued, and the infimum is taken over all
interpolants, with $\inf\varnothing=+\infty$. In particular, no linear
choice of an interpolant as a function of the data is required.

\subsection{An effective finite obstruction}

The quantitative input is the following finite interpolation estimate.

\begin{theorem}[Olevskii--Ulanovskii {\cite[Proposition~4.2]{OU2018}}]
\label{thm:OU}
There are absolute constants $a,b>0$ such that, if $\tau\in\N$,
$\Gamma\subset[-\tau,\tau)$ is finite with $\#\Gamma\ge2\tau$, and
$\sigma>\pi$, then
\begin{equation}\label{eq:OU}
 \Ki(\Gamma,\B_\sigma)
 \ge a\log\frac{\sigma}{\sigma-\pi+b\tau^{-1/3}}.
\end{equation}
\end{theorem}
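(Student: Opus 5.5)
Since this is the cited finite estimate of Olevskii and Ulanovskii, I would follow the strategy behind their density results: turn bounded interpolation on $\Gamma$ into a family of nearly orthogonal, well-localized functions. Then play this family off against the known eigenvalue distribution of the time--frequency concentration (prolate) operator on $[-\tau,\tau]$ with bandwidth slightly above $\pi$.

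\textbf{Step 1 (localized $L^2$ interpolants).} Put $K=\Ki(\Gamma,\B_\sigma)$ and fix a small $\delta>0$. For each $\gamma\in\Gamma$ take $f_\gamma\in\B_\sigma$ with $f_\gamma(\gamma')=\mathbf 1_{\gamma=\gamma'}$ and $\norm{f_\gamma}_\infty\le K$. Multiply by a real bump $\phi_\delta(t-\gamma)$ of exponential type $\delta$ with $\phi_\delta(0)=1$, so that
\[
 g_\gamma=f_\gamma\,\phi_\delta(\cdot-\gamma)
\]
lies in the Paley--Wiener space of type $\sigma+\delta$. This gives $\norm{g_\gamma}_{L^2}\lesssim K\delta^{-1/2}$ and decay away from $\gamma$ at a scale of order $1/\delta$. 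The $2\tau$ functions $g_\gamma$ are biorthogonal to the point evaluations at $\Gamma$. A Gram-matrix argument then shows that they span a $2\tau$-dimensional subspace in which every unit vector has $L^2$-mass on $I_r=[-\tau-r,\tau+r]$ at least $1-\eta$, with $r\approx\delta^{-1}\log(K/\eta)$.

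\textbf{Step 2 (min--max comparison).} By the min--max principle, the concentration operator $Q$ for $I_r$ and band $[-(\sigma+\delta),\sigma+\delta]$ has at least $2\tau$ eigenvalues $\ge1-\eta$. I would then invoke the quantitative plateau estimates for prolate eigenvalues, in the form that the number of eigenvalues $\ge1-\eta$ is at most
\[
 \frac{(\sigma+\delta)|I_r|}{\pi^2}-c\log\frac{1}{\eta}+O\bigl(\log(|I_r|)\bigr)
\]
(or an equivalent Landau--Widom type inequality with an explicit $\tau^{1/3}$-type error). Comparing with $2\tau$ gives
\[
 \frac{(\sigma+\delta)(\tau+r)}{\pi}-\tau\gtrsim\log\frac1\eta-O(\tau^{2/3}).
\]
I would choose $\eta=e^{-\Theta(\tau\,\cdot)}$ and $\delta\approx\sigma-\pi$, and optimize. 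The term $r\approx\delta^{-1}\log K$ must then pay for $\log(1/\eta)$, which forces $\log K\gtrsim$ a multiple of $\log\frac{\sigma}{\sigma-\pi+b\tau^{-1/3}}$. After exponentiating, this gives $K\ge a\log(\cdots)$; equivalently one can bound $\log K$ and absorb constants. The $b\tau^{-1/3}$ term comes from the error term in the eigenvalue count.

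\textbf{Main obstacle.} The delicate step is matching scales. The logarithmic lower bound requires that the cost of localization, $r\sim\delta^{-1}\log K$, enter \emph{linearly in $K$}, not in $\log K$, against the exponentially thin transition region of the prolate spectrum. If the bump-multiplication step in Step 1 only yields $\log K$, I would replace it with the sharper weighted construction. This means using interpolants normalized in a weighted $L^\infty$ norm with weight $e^{-\delta|t|}$, so that the tail contribution is $K e^{-\delta r}$ and the Gram perturbation is controlled directly by $K$. Failing that, I would simply cite \cite[Proposition~4.2]{OU2018}, as the paper does, since only the stated inequality is used later.
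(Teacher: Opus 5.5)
\paragraph{There is a genuine gap.} The paper gives no proof of this statement; it imports it from \cite[Proposition~4.2]{OU2018}. So only your fallback of citing it agrees with the paper. The argument you sketch before that cannot be completed, for a structural reason. Steps~1--2 use the hypothesis only through the cardinal data $\mathbf 1_{\gamma=\gamma'}$, together with the separation and Plancherel--P\'olya bounds these imply. They then compare with a spectral count that does not depend on $\Gamma$. Take $\Gamma=\{-\tau,\ldots,\tau-1\}$. Its cardinal interpolants $\sin(\pi(t-\gamma))/(\pi(t-\gamma))$ lie in $\B_\pi\subset\B_\sigma$ and have norm $1$ for every $\sigma>\pi$. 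Hence whatever Steps~1--2 deliver holds for this $\Gamma$ with $K=1$. Yet with $\sigma=\pi+\tau^{-1/3}$, the right side of \eqref{eq:OU} is about $\frac a3\log\tau\to\infty$. The logarithmic growth of $\Ki$ is an $\ell^\infty\to L^\infty$ Lebesgue-constant effect. It is produced by data with many nonzero entries and carefully chosen signs; compare the weights in Lemma~\ref{lem:cyclic}, where $\sum|w_j|\ge\frac2\pi\log(k+1)$. A proof has to exploit such data, not only the cardinal interpolants, and this is the missing idea.

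\paragraph{The spectral step also fails on its own terms.} Since $\#\Gamma/(2\tau)=1<\sigma/\pi$, the set is subcritical for $\B_\sigma$. The concentration operator on $I_r$ with band $\sigma+\delta$ has trace $2(\tau+r)(\sigma+\delta)/\pi\ge 2\tau+2\tau(\sigma-\pi)/\pi+2r$. In the relevant range, the plunge region removes only $O(\log(1/\eta)\log(\tau+r))$ eigenvalues from those above $1-\eta$. Making $\log(1/\eta)$ large costs $r\gtrsim\delta^{-1}\log(1/\eta)$, which adds at least as much to the trace. In your own displayed inequality, the left side is at least $\tau(\sigma-\pi)/\pi+r$. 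For $\delta\le1$ the term $r$ alone already matches $\log(1/\eta)$, so the inequality is consistent with every value of $K$ and forces nothing.

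\paragraph{The proposed fix and the final step do not help.} The weighted variant in your last paragraph still gives $r\approx\delta^{-1}\log(K/\eta)$, so the dependence on $K$ stays logarithmic. Moreover, no nonzero function of exponential type bounded on $\R$ can decay exponentially there, so even that $r$ is optimistic. Finally, the concluding step is a non sequitur: from $\log K\gtrsim c\log X$ one gets $K\gtrsim X^c$, not $K\ge a\log X$. That stronger bound is false. For the lattice above, sinc interpolation, localized as in Appendix~\ref{sec:cardinalappendix}, gives $\Ki=O(1+\log X)$.
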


The constants are independent of the minimum distance between the
nodes. We use the estimate only when $\#\Gamma=2\tau$ and
$\pi<\sigma\le2\pi$.

\begin{lemma}[Effective finite obstruction]\label{lem:finite}
There is an absolute constant $A_0>0$ such that, for every $H>0$, one can
choose an even integer $k\ge2$ with
\begin{equation}\label{eq:kbound}
 k\le e^{A_0(1+H)}
\end{equation}
for which the following holds. For every indexed $k$-tuple in an interval
of length $k+1$, some real data of absolute value at most one have no
interpolant in $\B_\pi$ of norm at most $H$.
\end{lemma}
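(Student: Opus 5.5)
We apply Theorem~\ref{thm:OU} after rescaling. Let $\Lambda=(\lambda_1,\ldots,\lambda_k)$ be an indexed $k$-tuple in an interval $I$ of length $k+1$, and set $\tau=k/2$, which is an integer because $k$ is even.

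First dispose of repeated entries. If $\lambda_i=\lambda_j$ for some $i\ne j$, prescribe $v_i=1$ and $v_j=-1$. No function takes two values at one point, so no interpolant exists at all, and the conclusion holds for every $H$. We may therefore assume the entries are distinct and form a set $\Gamma$ with $\#\Gamma=k=2\tau$.

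Next rescale so that Theorem~\ref{thm:OU} applies. Translate $I$ to $[-(k+1)/2,(k+1)/2)$ and map $t\mapsto s=\tfrac{k}{k+1}t$. The image $\Gamma'$ lies in $[-\tau,\tau)$ and has $2\tau$ points; the half-open endpoint is handled by shrinking the factor slightly, or by placing $I$ half-open, which is harmless. If $f\in\B_\pi$ interpolates data $v$ on $\Gamma$, then $g(s)=f(\tfrac{k+1}{k}s)$ belongs to $\B_\sigma$ with $\sigma=\pi(1+1/k)$, has the same sup norm, and interpolates $v$ on $\Gamma'$. Hence $\Ki(\Gamma,\B_\pi)\ge\Ki(\Gamma',\B_\sigma)$. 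Theorem~\ref{thm:OU} then gives
\[
 \Ki(\Gamma,\B_\pi)\ge a\log\frac{\pi(1+1/k)}{\pi/k+b(k/2)^{-1/3}}.
\]
For large $k$ the denominator is dominated by $b'k^{-1/3}$, so the right side is at least $\tfrac a3\log k-a''$ with absolute constants. Now choose $k$ to be the smallest even integer exceeding $\exp\bigl(3(H+1+a'')/a\bigr)$, enlarged if needed beyond an absolute threshold. This $k$ satisfies \eqref{eq:kbound} with an absolute $A_0$, and it makes the interpolation constant strictly larger than $H$, say at least $H+1$.

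The remaining step converts this to real data, and I expect it to be the main obstacle. The interpolation constant is defined with complex data. Since it exceeds $H$, there are complex data $w$ with $|w_i|\le1$ that have no interpolant of norm at most $H$; strictly speaking, the infimum of interpolant norms exceeds $H$ or is infinite. To pass to real data, write $w=u+iv$ with $u,v$ real and $|u_i|,|v_i|\le1$. If both $u$ and $v$ had interpolants of norm at most $H'$, then $f_u+if_v$ would interpolate $w$ with norm at most $2H'$. So the argument should be run with $2H$ in place of $H$, that is, $k$ chosen so that $\Ki>2H$. Then one of $u$ or $v$ has no interpolant of norm at most $H$. Real symmetrization is not even needed here, since the interpolants may be complex. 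This doubling only changes absolute constants, and the conclusion follows. The infimum-versus-attainment issue causes no trouble because we use strict inequality $\Ki>2H$.
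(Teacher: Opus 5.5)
Your proposal is correct and follows the paper's proof essentially step for step: the same disposal of repeated nodes, a dilation by a factor slightly below one that puts the nodes in $[-k/2,k/2)$ at a type just above $\pi$, Theorem~\ref{thm:OU} with $\tau=k/2$, the real/imaginary splitting that costs a factor $2$, and $k$ taken as the smallest even integer above $\exp(3(2H+O(1))/a)$. On the endpoint issue, the paper uses your ``shrink the factor'' option, with factor $k/(k+2)$, so the rescaled nodes lie in the open interval $(-k/2,k/2)$ and $\sigma_k=\pi(1+2/k)$; the half-open alternative you mention is not available in general, since the lemma is also applied to closed intervals.
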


\begin{proof}
The integer $k$ will be chosen below. If two indexed nodes coincide,
assign the values $1$ and $-1$ at these two indices; then no function
can interpolate all the data. We may therefore assume that the nodes
are distinct. Translate the containing interval to $[0,k+1]$ and put
\[
 \lambda_k=\frac{k}{k+2},\qquad
 s_i=\lambda_k\left(t_i-\frac{k+1}{2}\right),\qquad
 \sigma_k=\frac{\pi}{\lambda_k}.
\]
Then $s_i\in(-k/2,k/2)$ and $\pi<\sigma_k\le2\pi$. If every real unit
data vector had a norm-$H$ interpolant at the $t_i$, the change of
variable
\[
 g(z)=f\left(\frac{z}{\lambda_k}+\frac{k+1}{2}\right)
\]
would give the same assertion in $\B_{\sigma_k}$ at the $s_i$. Splitting
complex data into their real and imaginary parts would imply
\begin{equation}\label{eq:complexconversion}
 \Ki(\{s_i\},\B_{\sigma_k})\le2H.
\end{equation}
On the other hand, Theorem~\ref{thm:OU}, with $\tau=k/2$, yields
\[
 \Ki(\{s_i\},\B_{\sigma_k})
 \ge a\log\frac{\pi}{B k^{-1/3}}
 =\frac a3\log k-b_0.
\]
Here we used $\sigma_k\ge\pi$ and
$\sigma_k-\pi+b(k/2)^{-1/3}\le(2\pi+b2^{1/3})k^{-1/3}$.
We have set $B=2\pi+b2^{1/3}$ and $b_0=a\log(B/\pi)>0$. Choose the smallest
even integer not less than
\[
 \exp\left(\frac{3(2H+b_0+1)}a\right).
\]
The last lower bound is at least $2H+1$, contradicting
\eqref{eq:complexconversion}. Rounding increases $k$ by less than two;
enlarging an absolute constant gives \eqref{eq:kbound}.
\end{proof}

The qualitative finite obstruction also follows directly
from Beurling's strict density theorem. We give that alternative proof in
Appendix~\ref{sec:appendix}. The estimate \eqref{eq:kbound} is the part
for which the quantitative theorem above is used.

\subsection{Sign data and prescribed tolerance}

\begin{lemma}[Signs and prescribed tolerance]\label{lem:signs}
Fix $C>0$, $0\le\rho<1$, and let $k$ be supplied by
Lemma~\ref{lem:finite} for $H=C/(1-\rho)$. For every indexed $k$-tuple
$t_1,\ldots,t_k$ in an interval of length $k+1$, there is
$s\in\sgnset^k$ such that
\begin{equation}\label{eq:localbad}
 \max_{1\le i\le k}|f(t_i)-s_i|>\rho
 \quad\text{for every }f\in\B_\pi\text{ with }\norm{f}_\infty\le C.
\end{equation}
The same assertion holds for $-s$.
\end{lemma}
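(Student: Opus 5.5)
The plan is to argue by contradiction with Lemma~\ref{lem:finite}, using convex separation in the finite-dimensional space $\R^k$ of sample vectors. Fix the $k$-tuple $t_1,\ldots,t_k$ and set
\[
 K=\{(f(t_1),\ldots,f(t_k)):f\in\B_\pi^{\R},\ \norm{f}_\infty\le C\}\subset\R^k,
 \qquad Q=[-1,1]^k .
\]

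\textbf{Step 1: reduction to real functions.} Every sign vector $s$ is real. If a complex $f\in\B_\pi$ with $\norm{f}_\infty\le C$ satisfies $|f(t_i)-s_i|\le\rho$ for all $i$, then so does its real symmetrization $f^{\R}$. Indeed, on $\R$ the function $f^{\R}$ equals $\operatorname{Re}f$, and $f^{\R}$ has norm $\le C$ and type $\le\pi$. Hence \eqref{eq:localbad} fails for $s$ exactly when $s\in K+\rho Q$. It therefore suffices to find a vertex $s\in\sgnset^k$ with $s\notin K+\rho Q$.

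\textbf{Step 2: properties of $K$.} $K$ is convex and symmetric, since the real ball of $\B_\pi$ is convex and symmetric and evaluation is linear. I also want $K$ to be compact. Boundedness is clear. For closedness, take $f_m$ in the ball with $f_m(t_i)\to v_i$. By \eqref{eq:growth} the $f_m$ are locally uniformly bounded, namely $|f_m(z)|\le Ce^{\pi|\operatorname{Im}z|}$. Montel's theorem gives a locally uniform limit $f$. This limit is entire, real on $\R$, bounded by $C$ on $\R$, and satisfies $|f(z)|\le Ce^{\pi|z|}$, so it has type $\le\pi$. Thus $f$ is in the ball and interpolates $v$. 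I expect this compactness step to be the main technical point, though it is standard.

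\textbf{Step 3: the contradiction.} Suppose every vertex of $Q$ lies in $K+\rho Q$. Since $K+\rho Q$ is convex and $Q$ is the convex hull of its vertices, we get $Q\subset K+\rho Q$. Iterating and using convexity in the form $aK+bK=(a+b)K$ for $a,b\ge0$ gives
\[
 Q\subset(1+\rho+\cdots+\rho^m)K+\rho^{m+1}Q\subset\frac{1}{1-\rho}K+\rho^{m+1}Q .
\]
The last inclusion uses $0\in K$ and convexity. Letting $m\to\infty$ and using that $K$ is closed, every $v\in Q$ lies in $(1-\rho)^{-1}K$. So every real data vector of absolute value at most one has an interpolant in $\B_\pi$ of norm at most $C/(1-\rho)=H$. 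This contradicts the choice of $k$ in Lemma~\ref{lem:finite}.

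Hence some $s\in\sgnset^k$ lies outside $K+\rho Q$, which is \eqref{eq:localbad}. Finally, $K+\rho Q$ is symmetric, so $-s$ also lies outside it, and \eqref{eq:localbad} holds for $-s$ as well.
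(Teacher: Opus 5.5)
Your proof is correct, but it takes a different route from the paper's.

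\textbf{The paper's route.} The paper defines $K$ at norm level $H$ and takes the point $v\in[-1,1]^k\setminus K$ supplied by Lemma~\ref{lem:finite}. It strictly separates $v$ from $K$ by some $w$ and sets $s_i=\operatorname{sgn}(w_i)$. The resulting linear inequality $\sum_i w_i\operatorname{Re}f(t_i)<(1-\rho)\sum_i|w_i|$ then forces an index with $|f(t_i)-s_i|>\rho$.

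\textbf{Your route.} You avoid the separation theorem altogether and argue by contradiction. If every vertex of $Q$ were $\rho$-approximable from the $C$-ball, convexity would give $Q\subset K+\rho Q$. The geometric-series iteration, together with closedness of $K$, would then give $Q\subset(1-\rho)^{-1}K$, which is exact interpolation at norm $H$. This is essentially the finite-dimensional analogue of the successive-correction Lemma~\ref{lem:correction}.

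\textbf{What each buys.} Your argument makes transparent why the threshold is exactly $C/(1-\rho)$. Combined with the trivial converse (scale an exact norm-$H$ interpolant by $1-\rho$), it shows that ``all sign vectors are $\rho$-approximable at norm $C$'' is equivalent to ``all unit data are exactly interpolable at norm $H$.'' The paper's version is more concrete. It identifies the bad pattern as the sign pattern of a separating functional and produces a single weighted linear certificate valid for the whole ball at once. This is the same mechanism that reappears explicitly, with the cyclic weights $w_j$ and $s_j=\operatorname{sgn}(w_j)$, in Section~\ref{sec:sharp}. Both arguments rest on the same compactness of $K$, which you justify correctly.
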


\begin{proof}
Put $H=C/(1-\rho)$ and consider the set of real sample vectors
\[
 K=\{(g(t_1),\ldots,g(t_k)):
            g\in\B_\pi^{\R},\ \norm{g}_\infty\le H\}\subset\R^k.
\]
This set is convex and symmetric. It is compact by
\eqref{eq:growth} and the normal-family theorem: a locally uniform
limit preserves the type, the norm bound, and the sample values.
Lemma~\ref{lem:finite} supplies $v\in[-1,1]^k\setminus K$.
Strict separation gives a nonzero $w\in\R^k$ such that
\[
 \sup_{z\in K} w\cdot z<w\cdot v\le\sum_{i=1}^k|w_i|.
\]
Choose $s_i=\operatorname{sgn}(w_i)$ when $w_i\ne0$, and choose either
sign when $w_i=0$. If $f\in\B_\pi$ has norm at most $C$, the sample
vector of $f^{\R}/(1-\rho)$ lies in $K$. Hence
\[
 \sum_{i=1}^k w_i\operatorname{Re}f(t_i)
 <(1-\rho)\sum_{i=1}^k|w_i|.
\]
For at least one index with $w_i\ne0$, it follows that
$s_i\operatorname{Re}f(t_i)<1-\rho$, and therefore
$|f(t_i)-s_i|>\rho$. This proves \eqref{eq:localbad}.
Negating an approximant shows that the same assertion holds for $-s$.
\end{proof}

\section{From local obstructions to polynomial errors}\label{sec:counting}

\subsection{The finite counting inequality}

The angular substitution converts a degree bound into an exponential-type
bound. Disjoint groups of nodes then convert the local obstruction into
a lower bound for the total number of errors.

\begin{theorem}[Finite degree--error tradeoff]\label{thm:finite}
Fix $C>0$ and $0\le\rho<1$, and choose $k$ as in
Lemma~\ref{lem:signs}. For every $n\in\N$, $D>0$, and
$x\in[-1,1]^n$, there is $y\in\sgnset^n$ such that every polynomial
with $\deg P\le D$ and $\snorm{P}\le C$ satisfies
\begin{equation}\label{eq:finitecount}
 E_\rho(P;x,y)\ge
 \max\left\{0,
 \left\lceil\frac{n-(k-1)\lceil D/(k+1)\rceil}{k}\right\rceil
 \right\}.
\end{equation}
In particular,
\begin{equation}\label{eq:countlinear}
 E_\rho(P;x,y)\ge
 \frac{n}{k}-\frac{k-1}{k(k+1)}D-\frac{k-1}{k}.
\end{equation}
\end{theorem}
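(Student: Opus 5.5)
The plan is to pass to angular coordinates, cut the angular interval into $J=\lceil D/(k+1)\rceil$ pieces, rescale so that every piece has length at most $k+1$ and every polynomial becomes a function in $\B_\pi$ bounded by $C$, and then apply Lemma~\ref{lem:signs} to disjoint $k$-tuples of nodes inside each piece. The signs will depend only on $x$, never on $P$, and each $k$-tuple will force at least one error.

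\emph{Angular substitution.} Write $x_i=\cos\theta_i$ with $\theta_i\in[0,\pi]$, and put $d=\lfloor D\rfloor$. For a polynomial $P$ with $\deg P\le D$ and $\snorm{P}\le C$, the function $Q(z)=P(\cos z)$ is entire, of exponential type at most $d$, and satisfies $|Q|\le C$ on $\R$. Set
\[
 \lambda=\frac{(k+1)J}{\pi},\qquad t_i=\lambda\theta_i\in[0,(k+1)J],\qquad f(t)=Q(t/\lambda).
\]
Then $f$ has type at most $d\pi/((k+1)J)\le\pi$, because $J\ge D/(k+1)$. Hence $f\in\B_\pi$ and $\norm{f}_\infty\le C$. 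Since $D>0$ we have $J\ge1$, so $\lambda$ is well defined. This also covers $d=0$, where $Q$ is constant.

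\emph{Grouping and signs.} Partition $[0,(k+1)J]$ into $J$ consecutive half-open intervals $I_1,\ldots,I_J$ of length $k+1$, closing the last one, so that each index $i$ lies in exactly one interval. If $I_j$ contains $r_j$ indices, split them arbitrarily into $\lfloor r_j/k\rfloor$ disjoint $k$-tuples and discard the rest. Each such $k$-tuple lies in an interval of length $k+1$, so Lemma~\ref{lem:signs} supplies a sign vector $s$ for it, depending only on the nodes. Assign these signs to the indices of the tuple, and put $y_i=1$ on the discarded indices. Fix any $P$ as above. For each tuple, \eqref{eq:localbad} applied to $f$ gives an index in the tuple with $|P(x_i)-y_i|=|f(t_i)-s_i|>\rho$. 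Since the tuples are disjoint,
\[
 E_\rho(P;x,y)\ge\sum_{j=1}^J\left\lfloor\frac{r_j}{k}\right\rfloor\ge\sum_{j=1}^J\frac{r_j-(k-1)}{k}=\frac{n-(k-1)J}{k}.
\]
The left side is a nonnegative integer, so we may take the ceiling and the maximum with $0$, which gives \eqref{eq:finitecount}. Finally, $J\le D/(k+1)+1$ turns this into \eqref{eq:countlinear}.

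\emph{Expected obstacle.} The step that needs care is the normalization. It has to give type at most $\pi$ and piece length at most $k+1$ simultaneously, uniformly in $D$, including nonintegral $D$ and $D<1$. Choosing $\lambda$ through $J=\lceil D/(k+1)\rceil$, rather than through $d$, is what makes both requirements hold at once. The remaining care is in checking that $Q$ really lies in $\B_\pi$ after rescaling. It is bounded on $\R$ and of finite type, being a trigonometric polynomial, so this holds with complex coefficients as well. Real symmetrization is already built into Lemma~\ref{lem:signs}.
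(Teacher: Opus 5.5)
Your proposal is correct and follows essentially the same route as the paper: the angular substitution, a partition into $\lceil D/(k+1)\rceil$ pieces, disjoint $k$-tuples signed via Lemma~\ref{lem:signs}, and the count $\lfloor r_j/k\rfloor\ge (r_j-(k-1))/k$. The only difference is cosmetic. You rescale the angle to $[0,(k+1)J]$, so each piece has length exactly $k+1$. The paper instead rescales to $[0,D]$, using pieces of length at most $k+1$ and $f(z)=P(\cos(\pi z/D))$. Both choices give type at most $\pi$.
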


\begin{proof}
Put $t_i=(D/\pi)\arccos x_i\in[0,D]$. Partition this interval into
$N=\lceil D/(k+1)\rceil$ disjoint intervals of length at most $k+1$,
using half-open intervals except for the last right endpoint. If $n_j$
indexed nodes lie in the $j$th interval, split them into
$\lfloor n_j/k\rfloor$ disjoint groups of size $k$, leaving at most
$k-1$ indices unused. The total number $q$ of groups satisfies
\begin{equation}\label{eq:q}
 q=\sum_{j=1}^N\left\lfloor\frac{n_j}{k}\right\rfloor,
 \qquad kq\ge n-(k-1)N.
\end{equation}
On each group choose the signs from Lemma~\ref{lem:signs}, and assign
arbitrary signs to the unused indices.

If $P$ is an admissible polynomial, then
\[
 f(z)=P\left(\cos\frac{\pi z}{D}\right)
\]
is an entire trigonometric polynomial of exponential type at most
$\pi\deg P/D\le\pi$ (with the zero polynomial treated separately).
Since $\cos(\pi t/D)$ ranges over $[-1,1]$ for real $t$,
$\norm{f}_\infty=\snorm{P}\le C$. Moreover, $f(t_i)=P(x_i)$. Every complete group therefore contains at least one index
with error greater than $\rho$. The groups are disjoint, so
$E_\rho(P;x,y)\ge q$. This proves \eqref{eq:finitecount}; using
$N\le D/(k+1)+1$ proves \eqref{eq:countlinear}.
\end{proof}

\subsection{Degree surplus and error proportion}

\begin{corollary}[Separate degree and error proportions]\label{cor:tradeoff}
With $C,\rho,k$ as above, let
\[
 0\le\alpha<\frac2{k-1},\qquad
 a_k(\alpha)=\frac{2-(k-1)\alpha}{k(k+1)}.
\]
For every $0<\beta<a_k(\alpha)$ and all sufficiently large $n$, every
node configuration admits signs forcing
$E_\rho(P;x,y)>\beta n$ for all $P$ with
$\deg P\le(1+\alpha)n$ and $\snorm{P}\le C$.
It suffices that
\[
 n>\frac{k-1}{k(a_k(\alpha)-\beta)}.
\]
\end{corollary}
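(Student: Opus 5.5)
The plan is to apply Theorem~\ref{thm:finite} with $D=(1+\alpha)n$ and read off the linear bound \eqref{eq:countlinear}. For a given $n$ and configuration $x\in[-1,1]^n$, take the signs $y\in\sgnset^n$ supplied by Theorem~\ref{thm:finite} for this value of $D$. Every $P$ with $\deg P\le(1+\alpha)n$ and $\snorm{P}\le C$ then satisfies
\[
 E_\rho(P;x,y)\ge\frac nk-\frac{(k-1)(1+\alpha)n}{k(k+1)}-\frac{k-1}{k}.
\]
If $\alpha$ is such that $(1+\alpha)n$ is not an integer, we still take $D=(1+\alpha)n$. Theorem~\ref{thm:finite} allows any real $D>0$, and a degree at most $D$ means degree at most $\lfloor D\rfloor$, so nothing changes.

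Next I collect the coefficient of $n$:
\[
 \frac1k-\frac{(k-1)(1+\alpha)}{k(k+1)}
 =\frac{(k+1)-(k-1)-(k-1)\alpha}{k(k+1)}
 =\frac{2-(k-1)\alpha}{k(k+1)}=a_k(\alpha).
\]
The hypothesis $\alpha<2/(k-1)$ is exactly what makes $a_k(\alpha)>0$, and so the interval $0<\beta<a_k(\alpha)$ is nonempty. This gives
\[
 E_\rho(P;x,y)\ge a_k(\alpha)n-\frac{k-1}{k}.
\]

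It remains to compare this with $\beta n$. The right-hand side exceeds $\beta n$ if and only if $(a_k(\alpha)-\beta)n>(k-1)/k$. Since $a_k(\alpha)-\beta>0$, this is the stated condition $n>\frac{k-1}{k(a_k(\alpha)-\beta)}$. The inequality is strict, so we get the strict conclusion $E_\rho(P;x,y)>\beta n$.

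No step here is a real obstacle; the corollary follows by arithmetic from Theorem~\ref{thm:finite}. The only points to check are that the signs from Theorem~\ref{thm:finite} depend only on $x$, $n$ and $D$, and not on $P$, so one choice of $y$ works uniformly over the polynomial class. The zero polynomial and degenerate degrees are already covered there.
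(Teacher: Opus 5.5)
Your proposal is correct and is exactly the paper's argument: the paper proves the corollary by substituting $D=(1+\alpha)n$ into \eqref{eq:countlinear}. Your coefficient computation giving $a_k(\alpha)$ and your rearrangement to the threshold $n>\frac{k-1}{k(a_k(\alpha)-\beta)}$ are the same computation written out.
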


\begin{proof}
Substitute $D=(1+\alpha)n$ in \eqref{eq:countlinear}.
\end{proof}

\begin{proof}[Proof of Theorem~\ref{thm:main}]
Choose the even integer $k$ from Lemma~\ref{lem:signs}, and set
\begin{equation}\label{eq:explicitmain}
 \epsilon=\frac1{8k^2},\qquad n_0=4k^2.
\end{equation}
For $D=(1+\epsilon)n$, the grouping in Theorem~\ref{thm:finite}
satisfies, by \eqref{eq:q},
\[
 q\ge\frac nk-\frac{k-1}{k}
       \left\lceil\frac{D}{k+1}\right\rceil
 \ge\frac nk-\frac{D}{k+1}-1.
\]
The last, slightly weaker estimate suffices. Consequently,
\begin{align*}
 q-\epsilon n
 &\ge n\left(\frac{1-k\epsilon}{k(k+1)}-\epsilon\right)-1\\
 &=\frac{n}{k^2}\frac{7k-2}{8(k+1)}-1
 \ge\frac{n}{2k^2}-1\ge1,
\end{align*}
where $k\ge2$ and $n\ge4k^2$ were used. Thus $E_\rho\ge q>\epsilon n$.
Finally, \eqref{eq:kbound} and \eqref{eq:explicitmain} give
\eqref{eq:mainparameters} after increasing one absolute constant.
\end{proof}

\begin{remark}[Other bandlimited families]
The proof before its last change of variables is a statement about
$\B_\pi$ on an interval of length $D$. If nodes instead lie in an
interval of length $R$ and $f\in\B_\sigma$, rescaling replaces $D$ by
$\sigma R/\pi$ in \eqref{eq:finitecount}. Thus the same count applies
directly to entire functions of bounded exponential type and, in
particular, to trigonometric polynomials on a real interval.
\end{remark}

\section{Chebyshev interpolation and the upper bound}\label{sec:upper}

We prove the upper bounds in Theorems~\ref{thm:order}
and~\ref{thm:sharp} by constructing bounded interpolants on the
full Chebyshev--Lobatto grids. Filtered
interpolation at Chebyshev nodes provides related bounded interpolation
operators; see \cite{OccorsioThemistoclakis2021}. Here we use a direct
cardinal construction on the Chebyshev--Lobatto grid, keeping track of
the logarithmic dependence on the degree surplus.

\subsection{A localized cardinal construction}

\begin{proposition}[Interpolation with a small degree surplus]\label{prop:chebyshev}
Let $m\ge1$ and $0\le r\le m-1$ be integers. At the $m+1$ nodes
\[
 x_j=\cos\frac{\pi j}{m},\qquad 0\le j\le m,
\]
every complex data vector with $|y_j|\le1$ has an algebraic polynomial
interpolant $P$ satisfying
\begin{equation}\label{eq:chebbound}
 \deg P\le m+r,
 \qquad
 \snorm{P}\le3+\frac2\pi\log\frac{m}{r+1}.
\end{equation}
\end{proposition}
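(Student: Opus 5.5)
We pass to the angular variable, exactly as in the proof of Theorem~\ref{thm:finite}. Put $\theta_j=\pi j/m$ and extend the data evenly to all $2m$ equispaced points of the circle by $y_{-j}=y_{2m-j}:=y_j$. The plan is to build an even trigonometric polynomial $T(\theta)$ of degree at most $m+r$ with $T(\theta_l)=y_l$ for all $l\in\mathbb Z/2m\mathbb Z$, and then define $P$ by $P(\cos\theta)=T(\theta)$. Evenness ensures that $T$ is a polynomial in $\cos\theta$ of degree at most $m+r$. We also have $\snorm{P}=\sup_\theta|T(\theta)|$.

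\emph{The kernel.} We take $T(\theta)=\sum_{l=0}^{2m-1}y_l K(\theta-\theta_l)$, where
\[
 K(\theta)=\frac1{2m}\sum_{|k|\le m+r}w_k e^{ik\theta}.
\]
The weights are trapezoidal (de la Vallée Poussin type): $w_{-k}=w_k$, $w_k=1$ for $|k|$ up to roughly $m-r$, and $w_k$ decreasing linearly to $0$ at $|k|=m+r+1$. The ramp is centred at $|k|=m$, so that $w_{m+s}+w_{m-s}=1$. This symmetry is exactly the aliasing condition $\sum_{k\equiv\nu\ (2m)}w_k=1$ for every residue $\nu$. It gives $K(\theta_l)=\delta_{l,0}$ modulo $2m$, and hence interpolation. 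Because $K$ is even and the extended data are even in $l$, the sum $T$ is even.

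For $r=0$ the ramp has width one. The weights are then $w_{\pm m}=\tfrac12$, which is the classical Lobatto choice, so the same formula covers that case. In general we use ramp width $r+1$ to keep the degree at most $m+r$.

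\emph{The bound.} We have $\sup_\theta|T|\le\sup_\theta\sum_l|K(\theta-\theta_l)|$. Writing the trapezoid as a difference of two Fejér sums gives
\[
 K=\frac{(m+r+1)F_{m+r+1}-(m-r)F_{m-r}}{2m(r+1)}
\]
up to normalisation. From this representation we will derive the pointwise estimate
\[
 |K(\theta)|\le\min\Bigl\{1,\ \frac{1}{2m|\sin(\theta/2)|},\ \frac{c}{m(r+1)\sin^2(\theta/2)}\Bigr\}.
\]
The middle estimate comes from the Dirichlet-type form $K(\theta)=\frac{1}{2m}\cdot\frac{(\text{bounded oscillation})}{\sin(\theta/2)}$. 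The last one comes from the Fejér-difference form, via summation by parts twice.

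Summing over the $2m$ nodes at distances $\pi|l|/m$ from $\theta$, we use the bound $1$ for the at most two nearest nodes. For $1\le|l|\lesssim m/(r+1)$ the middle term contributes about $\sum 1/(\pi|l|)$ on each side, in total $\frac{2}{\pi}\log\frac{m}{r+1}+O(1)$. For $|l|\gtrsim m/(r+1)$ the quadratic term gives a tail of size $O(1)$.

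\emph{The main obstacle.} The hard part is getting the additive constant down to exactly $3$, rather than merely $O(1)$. To do this we will bound the harmonic sums by integrals with explicit comparisons. We will use $|\sin(x/2)|\ge |x|/\pi$ on $[-\pi,\pi]$, handle the two nodes adjacent to $\theta$ separately, each with bound at most $1$, and choose the crossover index between the second and third estimates as $\lceil m/(r+1)\rceil$. If the constants do not close with this split, the fallback is the exact closed form of the Lobatto Lebesgue function for the $r=0$ part. We would then treat the extra smoothing for $r\ge1$ as an averaging of shifted $r=0$ kernels. That averaging can only help the tail, and it reduces the logarithm from $\log m$ to $\log\frac{m}{r+1}$.
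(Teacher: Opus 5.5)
\textbf{Gap: the norm estimate.} Your kernel construction is sound and differs from the paper's. With the ramp running from $|k|=m-r-1$ to $|k|=m+r+1$, the trapezoid satisfies the aliasing identity, and in closed form
\[
 K(\theta)=\frac{\sin(m\theta)\sin((r+1)\theta)}{4m(r+1)\sin^2(\theta/2)}
 =H_m(\theta)\,\frac{\sin((r+1)\theta)}{(r+1)\sin\theta},
\]
where $H_m(\theta)=\sin(m\theta)\cot(\theta/2)/(2m)$ is the paper's Lobatto cardinal kernel. So your kernel is $H_m$ times a degree-$r$ multiplier, where the paper instead uses the normalized Fej\'er kernel $W_r$. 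Your Fej\'er-difference formula has an index slip. It should read $\bigl[(m+r+1)F_{m+r+1}-(m-r-1)F_{m-r-1}\bigr]/(4m(r+1))$, with $F_N=\frac1N\bigl(\frac{\sin(N\theta/2)}{\sin(\theta/2)}\bigr)^2$. With $F_{m-r}$ the aliasing sums are $2r+2$ or $2r+1$ depending on the residue, so this is not just a normalisation issue.

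The gap is in the estimate, which is the whole content of the proposition. Your middle bound $\frac1{2m|\sin(\theta/2)|}$, combined with $|\sin(x/2)|\ge|x|/\pi$, gives $\frac1{2|l|}$ per node, not $\frac1{\pi|l|}$. The value $\frac1{\pi|l|}$ is only the small-angle asymptotic. When $r$ is small, the range $|l|\lesssim m/(r+1)$ reaches angles of order $\pi$. Carried out as planned, you get $\log\frac m{r+1}$ with coefficient $1$ instead of $\frac2\pi$. That coefficient is exactly what produces the sharp $\frac\pi2$ in Theorem~\ref{thm:sharp}. The missing ingredient is a cotangent bound. Your ``bounded oscillation'' is at most $|\cos(\theta/2)|$, because $|\sin((r+1)\theta)|\le(r+1)|\sin\theta|$. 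Hence $|K|\le|H_m|\le|\cot(d/2)|/(2m)\le1/(md)$, which is the bound the paper uses.

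\textbf{The constant $3$ does not close.} Even after this repair, the integral comparisons you describe fail to reach $3$. Your multiplier decays only like $1/((r+1)d)$. With $A=m/(r+1)$ and your third bound (constant $c=\frac14$), the $\ell$-th distance shell contributes at most $g(\ell)=\min\{\frac2{\pi\ell},\frac{A}{2\ell^2}\}$. Counting $2$ for the nearest shell plus $g(1)+\int_1^\infty g$ gives $2+\frac2\pi(2+\log\frac\pi4)+\frac2\pi\log A\approx3.12+\frac2\pi\log A$. The split at $\lceil A\rceil$ that you propose gives about $3.14+\frac2\pi\log A$ for large $A$. Closing it would need harmonic-sum estimates at the Euler--Mascheroni level and a separate check for small $A$, and neither is in the proposal.

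The paper avoids this by multiplying $H_m$ by the normalised Fej\'er kernel $W_r$, whose decay $\min\{1,\pi^2/((r+1)^2d^2)\}$ is quadratic. The shell bound then becomes $\frac2{\pi\ell}\min\{1,A^2/\ell^2\}$, the tail costs only $\frac2\pi\cdot\frac12$, and the total is $2+\frac3\pi+\frac2\pi\log A<3+\frac2\pi\log A$.

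\textbf{The fallback does not help.} Bounding an average of $r=0$ kernels by the triangle inequality gives at best the average of their Lebesgue constants, each of order $\frac2\pi\log m$. The improvement to $\log\frac m{r+1}$ comes only from cancellation inside the combined kernel, and a pointwise analysis of that kernel must capture it.
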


The proof is given in Appendix~\ref{sec:cardinalappendix}. It multiplies
the trigonometric cardinal kernel by a Fej\'er kernel of degree $r$,
normalized to equal one at zero. This preserves interpolation and gives the logarithmic
bound with coefficient $2/\pi$ in \eqref{eq:chebbound}.

\subsection{Approximation at all nodes and the optimal parameter}

\begin{corollary}[An upper bound for any degree allowance]\label{cor:upper}
Fix $0<\alpha<1$ and $0\le\rho<1$. For every integer $N\ge2$, every data vector $y\in[-1,1]^N$
on the full grid $x^{(N)}$ in \eqref{eq:fullgrid}
admits a polynomial $Q$ satisfying
\begin{equation}\label{eq:approxupper}
 \deg Q<(1+\alpha)N,\qquad
 \max_i|Q(x_i^{(N)})-y_i|\le\rho,
 \qquad
 \snorm{Q}\le(1-\rho)\left(3+\frac2\pi\log\frac1\alpha\right).
\end{equation}
\end{corollary}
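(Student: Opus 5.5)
The plan is to apply Proposition~\ref{prop:chebyshev} to the scaled data $(1-\rho)y$ and to choose the degree surplus $r$ so that the degree stays below $(1+\alpha)N$ while $\log(m/(r+1))\le\log(1/\alpha)$. Put $m=N-1$, so that the grid $x^{(N)}$ in \eqref{eq:fullgrid} is exactly the node set $x_j=\cos(\pi j/m)$, $0\le j\le m$, of the proposition (with $x_i^{(N)}=x_{i-1}$). The data $z_j=(1-\rho)y_{j+1}$ satisfy $|z_j|\le1$. Proposition~\ref{prop:chebyshev} then supplies $P$ with $\deg P\le m+r$ and $\snorm{P}\le 3+\frac2\pi\log\frac{m}{r+1}$. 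We set $Q=P$. Then $Q(x_i^{(N)})=(1-\rho)y_i$, so $|Q(x_i^{(N)})-y_i|=\rho|y_i|\le\rho$. Scaling $P$ by $(1-\rho)$ is unnecessary, since $P$ already interpolates the scaled data. We are left to check
\[
\snorm{Q}\le 3+\tfrac2\pi\log\tfrac{m}{r+1}.
\]
This is not yet of the form \eqref{eq:approxupper}, since the target bound carries the factor $(1-\rho)$.

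To get that factor we use linearity instead. Apply the proposition to the unit data $y_j$ to obtain $P_0$ with $\snorm{P_0}\le 3+\frac2\pi\log\frac m{r+1}$, and set $Q=(1-\rho)P_0$. Then $Q(x_i)=(1-\rho)y_i$, and the error is again $\rho|y_i|\le\rho$. Moreover $\snorm{Q}\le(1-\rho)\bigl(3+\frac2\pi\log\frac m{r+1}\bigr)$, which is the required shape.

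It remains to choose $r$. The degree condition is $m+r<(1+\alpha)N$, that is, $N-1+r<N+\alpha N$, i.e.\ $r<1+\alpha N$. So we take the largest admissible value,
\[
r=\min\{\lceil\alpha N\rceil,\ m-1\},
\]
which is an integer with $0\le r\le m-1$ whenever $m\ge1$. Since $\lceil\alpha N\rceil<\alpha N+1$, the degree bound is strict. For the logarithm, if $r=\lceil\alpha N\rceil$ then $r+1>\alpha N\ge\alpha m$, so $m/(r+1)<1/\alpha$. If instead $r=m-1$, then $m/(r+1)=1\le1/\alpha$. In both cases $\log\frac m{r+1}\le\log\frac1\alpha$, and \eqref{eq:approxupper} follows.

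The main obstacle is bookkeeping at the edges, since the analytic content sits entirely in Proposition~\ref{prop:chebyshev}. The constraint $0\le r\le m-1$ requires $m\ge1$, which holds because $N\ge2$. For $N=2$ we get $m=1$ and $r=0$: the degree is $1<2(1+\alpha)$, the bound is $3$, and everything holds. The other delicate point is that the degree inequality in \eqref{eq:approxupper} is strict. This is why we compare with $\lceil\alpha N\rceil$ rather than $\lfloor\alpha N\rfloor+1$, and check $N-1+\lceil\alpha N\rceil<N+\alpha N$ directly.
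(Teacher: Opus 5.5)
Your proposal is correct and is essentially the paper's proof: set $m=N-1$, apply Proposition~\ref{prop:chebyshev} to $y$, and take $Q=(1-\rho)P$. The only difference is the surplus. The paper takes $r=\lfloor\alpha m\rfloor$, which satisfies $r\le m-1$ and $r+1>\alpha m$ automatically because $\alpha<1$, so your minimum with $m-1$ and the ensuing case split are unnecessary. The first-paragraph detour through the scaled data can also be dropped.
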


\begin{proof}
Set $m=N-1$ and $r=\lfloor\alpha m\rfloor$, so $0\le r\le m-1$.
Proposition~\ref{prop:chebyshev} gives an exact interpolant $P$ at the
Chebyshev nodes. Since $r+1>\alpha m$,
\[
 \snorm{P}\le3+\frac2\pi\log\frac1\alpha,
 \qquad \deg P\le(1+\alpha)m<(1+\alpha)N.
\]
Take $Q=(1-\rho)P$. At each node its error is
$\rho|y_i|\le\rho$, and its norm has the bound in
\eqref{eq:approxupper}.
\end{proof}

\begin{proof}[Proof of Theorem~\ref{thm:order} and \eqref{eq:sharpupper}]
The lower bound is Theorem~\ref{thm:main}. If $H>3$ and an allowance
$\epsilon\in(0,1)$ satisfies
$\epsilon\ge\exp[(\pi/2)(3-H)]$, then
\[
 3+\frac2\pi\log(1/\epsilon)\le H.
\]
Corollary~\ref{cor:upper}, with $\alpha=\epsilon$, supplies for every
$N\ge2$ the full Chebyshev grid on which every bounded real data vector
has a polynomial of the required degree and norm with no errors
greater than $\rho$. Thus $\epsilon$ is inadmissible even for
$\epsilon_{\mathrm{Ch}}$. Taking the supremum proves
\eqref{eq:sharpupper}; \eqref{eq:parametercomparison} then gives
\eqref{eq:orderupper}.
\end{proof}

\section{The sharp constant on Chebyshev grids}\label{sec:sharp}

We prove the lower bound in Theorem~\ref{thm:sharp}. The argument uses
a periodic sign sequence whose sample values force a large two-point
evaluation functional. Averaging over periods then bounds the loss
caused by all exceptional samples together.

\subsection{A cyclic interpolation functional}

Let $k\ge2$ be even, and let $\mathcal T_k$ be the real trigonometric
polynomials of period $2k$ with frequencies $\ell\pi/k$, $|\ell|\le k$.
Their norm is the supremum on $\R$.

\begin{lemma}\label{lem:cyclic}
For $0\le j<2k$, put
\begin{equation}\label{eq:cyclicweights}
 w_j=\frac{(-1)^j}{2k\sin(\pi(1/2-j)/k)}.
\end{equation}
Every $F\in\mathcal T_k$ satisfies
\begin{equation}\label{eq:cyclicfunctional}
 \sum_{j=0}^{2k-1}w_jF(j)
 =\frac{F(1/2)-F(k+1/2)}2,
 \qquad
 \left|\sum_{j=0}^{2k-1}w_jF(j)\right|\le\norm{F}_\infty.
\end{equation}
Moreover,
\begin{equation}\label{eq:weightbounds}
 W_k:=\sum_{j=0}^{2k-1}|w_j|\ge\frac2\pi\log(k+1),
 \qquad
 \norm{w}_{\ell^\infty}\le\frac12.
\end{equation}
\end{lemma}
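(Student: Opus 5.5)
The plan is to derive the identity \eqref{eq:cyclicfunctional} from the classical interpolation formula for trigonometric polynomials at $2k$ equispaced points, and then to bound the weights by elementary estimates on $\sin$.

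\emph{The identity.} Write $\mathcal T_k=\mathcal S_k\oplus\R\sin(\pi t)$. Here $\mathcal S_k$ consists of the elements of $\mathcal T_k$ whose frequency-$k$ part is a multiple of $\cos(\pi t)$.

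First consider the complementary term $\sin(\pi t)$.
\begin{itemize}
\item It vanishes at every integer $j$, so the left side of \eqref{eq:cyclicfunctional} ignores it.
\item On the right side, $\sin(\pi/2)=1$ and $\sin(\pi(k+1/2))=(-1)^k=1$ because $k$ is even. So the two-point functional also kills it.
\end{itemize}
Hence it suffices to treat $F\in\mathcal S_k$.

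On $\mathcal S_k$, the samples $F(0),\ldots,F(2k-1)$ determine $F$ through the standard cardinal formula
\[
 F(t)=\sum_{j=0}^{2k-1}F(j)\,D(t-j),\qquad
 D(u)=\frac{\sin(\pi u)}{2k}\cot\frac{\pi u}{2k}.
\]
I will check this formula on the exponential basis, or quote it as the even-order Dirichlet kernel.

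Substitute $u=1/2-j$. Then $\sin(\pi(k+u))=\sin(\pi u)$ and $\cot(\pi(k+u)/(2k))=-\tan(\pi u/(2k))$. Using $\cot\theta+\tan\theta=2/\sin(2\theta)$, this gives
\[
 D(u)-D(k+u)=\frac{\sin(\pi u)}{k\,\sin(\pi u/k)}.
\]
Since $\sin(\pi(1/2-j))=(-1)^j$, this equals $2w_j$, which yields the identity. The inequality in \eqref{eq:cyclicfunctional} is then immediate from the triangle inequality, because $|F(1/2)-F(k+1/2)|/2\le\norm{F}_\infty$.

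\emph{Weight bounds.} Put $\theta_j=\pi(j-1/2)/k$, so that $|w_j|=1/(2k|\sin\theta_j|)$.

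For the sup bound, the smallest value of $|\sin\theta_j|$ is $\sin(\pi/(2k))$, attained at $j=0$ and $j=1$. Jordan's inequality gives $\sin(\pi/(2k))\ge 1/k$, hence $\norm{w}_{\ell^\infty}\le 1/2$.

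For the sum, I split the indices into $j=1,\ldots,k$ and $j\in\{0\}\cup\{k+1,\ldots,2k-1\}$. In each half, the angles reduced mod $\pi$ are $(m-1/2)\pi/k$ with $m=1,\ldots,k$. These are symmetric about $\pi/2$, so each value $\min(\theta,\pi-\theta)=(m-1/2)\pi/k$ with $1\le m\le k/2$ occurs twice.

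Using $|\sin\theta|\le\min(\theta,\pi-\theta)$, each half contributes at least $\frac{2}{\pi}\sum_{m=1}^{k/2}\frac1{2m-1}$, so
\[
 W_k\ge\frac4\pi\sum_{m=1}^{k/2}\frac1{2m-1}.
\]
Integral comparison with $\int_1^{M+1}dx/(2x-1)=\tfrac12\log(2M+1)$, taken at $M=k/2$, gives the last sum at least $\tfrac12\log(k+1)$. This yields $W_k\ge\frac2\pi\log(k+1)$.

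\emph{Main obstacle.} The step needing the most care is the first one. I have to pin down the exact cardinal kernel for an even number of nodes, including the treatment of the Nyquist frequency $k$. I also have to confirm that the parity of $k$ is exactly what makes the two-point functional annihilate $\sin(\pi t)$. If the kernel formula causes trouble, I will instead verify \eqref{eq:cyclicfunctional} directly on each basis function $\cos(\pi\ell t/k)$ and $\sin(\pi\ell t/k)$, $|\ell|\le k$. That reduces to evaluating geometric-type sums $\sum_j w_j e^{i\pi\ell j/k}$.
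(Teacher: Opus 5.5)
Your proposal is correct and follows essentially the paper's proof: the same cardinal kernel, with $\sin(\pi t)$ as the complementary direction annihilated by the two-point functional because $k$ is even, and the same $\cot\theta+\tan\theta=2/\sin 2\theta$ computation of the weights. The weight bounds likewise use the paper's symmetry pairing, $\sin u\le u$, integral comparison and Jordan's inequality. One small correction: the minimum $\sin(\pi/(2k))$ of $|\sin\theta_j|$ is also attained at $j=k,k+1$, not only at $j=0,1$, but this does not affect the bound.
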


\begin{proof}
The cardinal kernel
\[
 L_k(t)=\frac{1+2\sum_{\ell=1}^{k-1}\cos(\ell\pi t/k)+\cos(\pi t)}{2k}
       =\frac{\sin(\pi t)}{2k}\cot\frac{\pi t}{2k}
\]
belongs to $\mathcal T_k$ and satisfies
$L_k(j)=\mathbf1_{\{j=0\}}$ for $0\le j<2k$, with removable
singularities filled in. The sample map from the $(2k+1)$-dimensional
space $\mathcal T_k$ onto $\R^{2k}$ therefore has a one-dimensional
kernel, spanned by $\sin(\pi t)$. Thus
\begin{equation}\label{eq:cyclicrepresentation}
 F(t)=\sum_{j=0}^{2k-1}F(j)L_k(t-j)+b\sin(\pi t)
\end{equation}
for some $b\in\R$. Since $k$ is even, the difference of evaluations
at $1/2$ and $k+1/2$ annihilates the last term. Applying it to the
cardinal functions, and using
$\cot u-\cot(u+\pi/2)=2/\sin(2u)$, gives
\eqref{eq:cyclicweights} and~\eqref{eq:cyclicfunctional}.

The absolute weights satisfy
\[
 W_k=\frac1k\sum_{j=0}^{k-1}\csc\frac{\pi(j+1/2)}k.
\]
By symmetry and $\sin u\le u$,
\[
 W_k\ge\frac2\pi\sum_{j=0}^{k/2-1}\frac1{j+1/2}
 \ge\frac2\pi\int_0^{k/2}\frac{du}{u+1/2}
 =\frac2\pi\log(k+1).
\]
Finally, $\sin u\ge2u/\pi$ for $0\le u\le\pi/2$ gives
\[
 \norm{w}_{\ell^\infty}
 =\frac1{2k\sin(\pi/(2k))}\le\frac12.
\]
\end{proof}

\subsection{Period averaging and the exceptional samples}

The next proposition states the obstruction before the choice of
parameters. Its sign sequence is independent of the locations of the
exceptional samples.

\begin{proposition}\label{prop:periodicobstruction}
Let $C>0$, $0\le\rho<1$, and $H=C/(1-\rho)$. Suppose $k\ge2$ is even
and $\epsilon\in(0,1)$ satisfies
\begin{equation}\label{eq:periodiccriterion}
 \epsilon k<1,\qquad H<W_k-k\epsilon(H+1),
\end{equation}
where $W_k$ is defined in Lemma~\ref{lem:cyclic}.
Extend $s_j=\operatorname{sgn}(w_j)$ periodically from
$0\le j<2k$ to all integers. Then, for all sufficiently large $n$,
the signs $y_i=s_{i-1}$ on $x^{(n)}$ satisfy
\[
 E_\rho(P;x^{(n)},y)>\epsilon n
 \qquad\text{for every }P\in\mathcal P_{(1+\epsilon)n}(C).
\]
\end{proposition}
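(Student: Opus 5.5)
Pass to angular coordinates adapted to the grid. For $P\in\mathcal P_{(1+\epsilon)n}(C)$ set $f(t)=P(\cos(\pi t/(n-1)))$, so that $f(i-1)=P(x_i^{(n)})$ and $\norm{f}_\infty\le C$. Then $f$ is a trigonometric polynomial of period $2(n-1)$ with frequencies $\ell\pi/(n-1)$, $|\ell|\le(1+\epsilon)n$. Arguing by contradiction, suppose the set $X$ of indices with error greater than $\rho$ has $\#X\le\epsilon n$. Replace $f$ by $\operatorname{Re} f/(1-\rho)$ (the symmetrization does not increase errors against real data) and call this $g$. Then $\norm{g}_\infty\le H$, and at every node outside $X$ we have $s_{i-1}g(i-1)\ge 1$.

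To apply Lemma~\ref{lem:cyclic} on windows of length $2k$, I would compare $g$ on a window $[m,m+2k)$ with an element of $\mathcal T_k$. The frequency of $g$ is at most $\pi(1+\epsilon)n/(n-1)\approx\pi(1+\epsilon)$. This is larger than $\pi$, the top frequency of $\mathcal T_k$, so the identity \eqref{eq:cyclicfunctional} cannot be used directly on $g$ restricted to a window. Instead I would average. For each shift $m$ congruent to $0$ mod $2k$, the sum $\Phi_m=\sum_{j=0}^{2k-1}w_j g(m+j)$ satisfies
\[
\Phi_m\ge\sum_j|w_j|-\sum_{j:\,m+j\in X}|w_j|\,(1+H)\ge W_k-\tfrac12(H+1)\#(X\cap[m,m+2k)),
\]
using $\norm{w}_{\ell^\infty}\le 1/2$. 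Averaging over the roughly $n/(2k)$ windows, the exceptional set contributes on average at most $\tfrac12(H+1)\cdot 2k\epsilon\cdot(1+o(1))=k\epsilon(H+1)(1+o(1))$. Hence $\frac1M\sum_m\Phi_m\ge W_k-k\epsilon(H+1)-o(1)$.

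For the upper side, write $\frac1M\sum_m\Phi_m$ as the functional $g\mapsto\sum_j w_j\,\overline{g}(j)$, where $\overline g(t)=\frac1M\sum_m g(m+t)$ is the average of $g$ over translates by multiples of $2k$. Averaging a trigonometric polynomial over translates by $2k$ across (nearly) a full period kills all frequencies except those that are multiples of $\pi/k$. The surviving frequencies lie in $\{\ell\pi/k\}$ with $|\ell|\pi/k\le\pi(1+\epsilon)n/(n-1)$. When $\epsilon k<1$ and $n$ is large, this forces $|\ell|\le k$, so $\overline g\in\mathcal T_k$ up to an error that tends to $0$ with $n$. This is where $\epsilon k<1$ enters. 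Then \eqref{eq:cyclicfunctional} gives average $\le\norm{\overline g}_\infty+o(1)\le H+o(1)$. Together with the lower bound, this yields $H\ge W_k-k\epsilon(H+1)-o(1)$, contradicting \eqref{eq:periodiccriterion} for large $n$.

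The main obstacle is this averaging step. The period $2(n-1)$ need not be divisible by $2k$, so the translates do not tile a full period exactly. Moreover, $\cos(\pi t/(n-1))$ is even, so one must use the full circle $t\in[0,2(n-1))$, where nodes repeat by reflection. The signs must be checked to be compatible there, or else only half the circle is used, at the cost of a boundary term. I would handle this by averaging over all $\lfloor (n-1)/k\rfloor$ windows in one half-period. The ragged end contributes $O(k/n)\norm{g}_\infty$. I would then bound the non-aliased frequency components of $\overline g$ by a geometric-sum (Fejér-type) estimate, which gives $O(k/(n\,\mathrm{dist}))$. The distance of the bad frequencies from multiples of $2\pi$ in the phase $2k\omega$ stays bounded below precisely because $\epsilon k<1$. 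All these errors vanish as $n\to\infty$, which gives the ``sufficiently large $n$'' threshold by a non-quantitative limit.
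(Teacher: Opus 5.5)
Your lower bound for the averaged functional is correct and is the paper's estimate \eqref{eq:weightederrors}: you count errors window by window with $\norm{w}_{\ell^\infty}\le\frac12$ and average over translates by multiples of $2k$. The gap is in the upper bound, at the step you flag yourself.

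Averaging over translates by $2k$ that cover only half of the period $2(n-1)$ does \emph{not} kill the frequencies $\omega_\ell=\ell\pi/(n-1)$ that are not multiples of $\pi/k$. These frequencies have spacing $\pi/(n-1)$, so the phase $2k\omega_\ell$ comes within $O(1/n)$ of $2\pi\mathbb{Z}$. The condition $\epsilon k<1$ does nothing to prevent this. At the $r$-th neighbour of each $p\pi/k$, the multiplier $\frac1M\sum_{q<M}e^{2ikq\omega_\ell}$ can therefore have size comparable to $1/|r|$, not $O(1/n)$. For instance, $P(x)=Cx$ gives $g(t)=H\cos(\pi t/(n-1))$ and
\[
 \bar g(t)=-\frac{2H}{\pi}\sin\frac{\pi t}{n-1}+O\!\left(\frac{kH}{n}\right).
\]
So the frequency $\pi/(n-1)$ survives with weight about $2/\pi$. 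Since $\bar g(0)\approx0$ while $\bar g\approx-2H/\pi$ at multiples of $2k$ near $(n-1)/2$, $\bar g$ is not within $o(1)$ of any $2k$-periodic function uniformly on $\R$.

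Even where a per-frequency bound $O(1/(M\,\mathrm{dist}))$ does hold, you could not sum it over the $\asymp n$ frequencies. The bound $\norm{g}_\infty\le H$ controls the Fourier coefficients only in $\ell^2$ (Parseval), not in $\ell^1$.

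Full-period averaging would filter exactly, but it is not available. On the reflected half-period the data are $s_{2(n-1)-t}$. Since $s_{1-j}=s_j$ and $s_{j+k}=-s_j$, the sign pattern has no integer centre of symmetry, so these data do not match $s_t$.

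What is true, and all that Lemma~\ref{lem:cyclic} needs, is a local statement: on a fixed window containing $0,\dots,2k-1$, $1/2$ and $k+1/2$, $\bar g$ is close to an element of $\mathcal T_k$ of norm at most $H+o(1)$. The paper gets this by compactness rather than by frequency bookkeeping.
\begin{itemize}
\item The averages satisfy $|g_n(z)|\le Ce^{\sigma_n|\operatorname{Im}z|}$, so a subsequence converges locally uniformly to an entire $g$.
\item The telescoping identity $g_n(t+2k)-g_n(t)=(f_n(t+2kM_n)-f_n(t))/M_n$ makes $g$ exactly $2k$-periodic.
\item Shifting the contour in its Fourier coefficients kills all $|\ell|>k(1+\epsilon)$. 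By $\epsilon k<1$, this leaves $g\in\mathcal T_k$ with $\norm{g}_\infty\le C$, and the lower bound passes to the limit.
\end{itemize}

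Your Fourier route can also be repaired. Group the frequencies around each $p\pi/k$. Replace the neighbours with $|r|\le R$ by $e^{ip\pi t/k}$ on the window; the error is $O(k^2R^2H/n)$. Bound the remaining neighbours by Parseval and $\sum_{|r|>R}r^{-2}$, which gives $O(H\sqrt{k/R})$. Then let $R\to\infty$ with $R=o(n)$. The condition $\epsilon k<1$ ensures that every retained group has $|p|\le k$.

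In either form, the comparison with $\mathcal T_k$ has to be local to the window, not a global filtering claim.
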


\begin{proof}
Suppose the assertion fails for arbitrarily large $n$. Choose, along
an unbounded sequence, polynomials $P_n$ of the stated degree and norm
with at most $\epsilon n$ errors. Taking real parts does not increase
the norm or the errors against real data, so we may assume $P_n$ is
real. Put $m=n-1$ and
\[
 f_n(t)=P_n\!\left(\cos\frac{\pi t}{m}\right),\qquad
 \sigma_n=\pi(1+\epsilon)\frac{n}{n-1}.
\]
Then $f_n\in\B_{\sigma_n}^{\R}$ and $\norm{f_n}_\infty\le C$.
For all large $n$, let $M_n=\lfloor n/(2k)\rfloor\ge1$ and define
\begin{equation}\label{eq:periodaverage}
 g_n(t)=\frac1{M_n}\sum_{\ell=0}^{M_n-1}f_n(t+2k\ell).
\end{equation}
The vertical growth estimate gives
\[
 |g_n(z)|\le C e^{\sigma_n|\operatorname{Im}z|}.
\]
By the normal-family theorem, a subsequence converges locally uniformly
to an entire function $g$, real on $\R$, with
\begin{equation}\label{eq:limitgrowth}
 |g(z)|\le C e^{\pi(1+\epsilon)|\operatorname{Im}z|}.
\end{equation}
The telescoping identity
\[
 g_n(t+2k)-g_n(t)
 =\frac{f_n(t+2kM_n)-f_n(t)}{M_n}
\]
and the same growth estimate show that the left-hand side tends to
zero locally uniformly. Hence $g$ is $2k$-periodic.

We claim that $g\in\mathcal T_k$. For an integer $\ell$, let
\[
 \widehat g(\ell)=\frac1{2k}\int_0^{2k}
                 g(t)e^{-i\ell\pi t/k}\,dt.
\]
Periodicity and Cauchy's theorem allow the path of integration to be
shifted to height $y$. By \eqref{eq:limitgrowth},
\[
 |\widehat g(\ell)|
 \le C\exp\left(\pi(1+\epsilon)|y|+\frac{\ell\pi y}{k}\right).
\]
Letting $y\to-\infty$ for $\ell>k(1+\epsilon)$, or
$y\to+\infty$ for $\ell<-k(1+\epsilon)$, proves that these Fourier
coefficients vanish. Since $\epsilon k<1$, only the frequencies
$|\ell|\le k$ remain. Fourier uniqueness on the circle gives the claim.

For $0\le j<2k$, let $B_{n,j}$ count the erroneous samples among the
indices $i=j+2k\ell+1$, $0\le\ell<M_n$. These indices are at most
$2kM_n\le n$, and all their labels equal $s_j$. At a good sample,
$s_jf_n(j+2k\ell)\ge1-\rho$; at a bad sample, it is at least $-C$.
Consequently,
\begin{align}
 \sum_{j=0}^{2k-1}w_jg_n(j)
 &\ge(1-\rho)W_k-
       \frac{C+1-\rho}{M_n}\sum_{j=0}^{2k-1}|w_j|B_{n,j}
       \notag\\
 &\ge(1-\rho)W_k-
       \frac{C+1-\rho}{M_n}\norm{w}_{\ell^\infty}\epsilon n.
       \label{eq:weightederrors}
\end{align}
Here we used the total bound $\sum_jB_{n,j}\le\epsilon n$.
Passing to the limit, using $n/M_n\to2k$ and
Lemma~\ref{lem:cyclic} for $g$, yields
\[
 C\ge(1-\rho)W_k-
       2k\epsilon(C+1-\rho)\norm{w}_{\ell^\infty}.
\]
After division by $1-\rho$ and application of
\eqref{eq:weightbounds}, this becomes
\begin{equation}\label{eq:periodcontradiction}
 H\ge W_k-2k\epsilon(H+1)\norm{w}_{\ell^\infty}
   \ge W_k-k\epsilon(H+1),
\end{equation}
contrary to \eqref{eq:periodiccriterion}. This rules out every unbounded
sequence of failures and proves the assertion for all sufficiently
large $n$.
\end{proof}

\begin{proof}[Completion of the proof of Theorem~\ref{thm:sharp}]
Choose the smallest even integer $k$ such that
\[
 k\ge\exp\!\left[\frac\pi2(H+1)\right],\qquad
 \epsilon=\frac1{2k(H+1)}.
\]
Then $\epsilon k<1/2$ and
$W_k\ge(2/\pi)\log(k+1)>H+1$. Thus
$W_k-k\epsilon(H+1)>H+1/2$, so
Proposition~\ref{prop:periodicobstruction} makes $\epsilon$ admissible
for the Chebyshev grids. Since rounding increases $k$ by less than two
and $e^{\pi(H+1)/2}>2$,
\[
 k\le2e^{\pi(H+1)/2},\qquad
 \epsilon_{\mathrm{Ch}}(C,\rho)
 \ge\frac{e^{-\pi/2}}{4(H+1)}e^{-\pi H/2}.
\]
This proves \eqref{eq:sharplower}. Together with
\eqref{eq:sharpupper}, already proved in Section~\ref{sec:upper}, it
gives, for $H>3$,
\[
 \frac\pi2 H-\frac{3\pi}{2}
 \le\log\frac1{\epsilon_{\mathrm{Ch}}(C,\rho)}
 \le\frac\pi2 H+\log(H+1)+\frac\pi2+\log4.
\]
The remaining assertions follow.
\end{proof}

\begin{remark}
The averaging in \eqref{eq:periodaverage} is over a number of periods
that tends to infinity with $n$, while $k$ remains fixed. The weighted
estimate \eqref{eq:weightederrors} allows the exceptional samples to
be distributed arbitrarily among the periods. Thus the signs are chosen
before both the polynomial and its exceptional set, as required by the
definition of $\epsilon_{\mathrm{Ch}}$.
\end{remark}

\section{Further consequences}\label{sec:consequences}

\subsection{Random signs}\label{sec:random}

\begin{theorem}[Random-sign obstruction]\label{thm:randomintro}
For every $C>0$ and $0\le\rho<1$, there exist
$\alpha,\beta,\gamma>0$ and $n_1\in\N$ such that, for every $n\ge n_1$
and every fixed $x\in[-1,1]^n$, independent uniform signs
$Y_1,\ldots,Y_n$ satisfy
\begin{equation}\label{eq:randomintro}
 \mathbb P\!\left\{
 E_\rho(P;x,Y)>\beta n\ \text{for every }
 P\in\mathcal P_{(1+\alpha)n}(C)
 \right\}\ge1-e^{-\gamma n}.
\end{equation}
\end{theorem}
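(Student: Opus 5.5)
Reuse the grouping from Theorem~\ref{thm:finite}, but replace the deterministic local signs of Lemma~\ref{lem:signs} by uniform random signs on each group. Fix $C,\rho$ and let $k$ be given by Lemma~\ref{lem:signs}. For any $k$-tuple $t_1,\ldots,t_k$ in an interval of length $k+1$, that lemma produces a sign vector $s$ (and also $-s$) that no $f\in\B_\pi$ with $\norm{f}_\infty\le C$ approximates within $\rho$ at every node. A uniform random vector in $\sgnset^k$ equals $s$ or $-s$ with probability $p=2^{1-k}>0$. So each group is ``bad'' with probability at least $p$, and bad groups are mutually independent because the groups are disjoint.

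The steps are as follows.

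\begin{itemize}
\item[(i)] Take $D=(1+\alpha)n$ and pass to angular coordinates $t_i=(D/\pi)\arccos x_i$, exactly as in Theorem~\ref{thm:finite}. This gives $q\ge n/k-D/(k+1)-1$ disjoint groups, each lying in an interval of length $k+1$.
\item[(ii)] The groups depend only on $x$, not on $Y$. Hence the number $Z$ of groups whose random signs equal $\pm s^{(g)}$ stochastically dominates a $\mathrm{Bin}(q,p)$ variable.
\item[(iii)] By Hoeffding's inequality, $\mathbb P\{Z\le pq/2\}\le e^{-p^2q/2}$.
\item[(iv)] Fix an outcome with $Z>pq/2$. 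For every $P\in\mathcal P_D(C)$, the function $f(z)=P(\cos(\pi z/D))$ lies in $\B_\pi$ and has norm at most $C$. By Lemma~\ref{lem:signs} it errs by more than $\rho$ at some index of each bad group. Therefore $E_\rho(P;x,Y)\ge Z$.
\end{itemize}

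This is simultaneous over all $P$: the bad event is determined by $Y$ alone, so no union bound over polynomials is needed. This is the key point, and the one I would check most carefully.

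Now choose the parameters. Take $\alpha=1/(2k)$. Then for $n\ge 4k^2$,
\[
 q\ge\frac nk-\frac{(1+\alpha)n}{k+1}-1\ge \frac{n}{2k^2}-1\ge\frac{n}{4k^2},
\]
by the same arithmetic as in the proof of Theorem~\ref{thm:main}. Set
\[
 \beta=\frac{p}{16k^2},\qquad \gamma=\frac{p^2}{8k^2},\qquad n_1=4k^2.
\]
Then on the good event $E_\rho>pq/2\ge pn/(8k^2)>\beta n$. The failure probability is at most $e^{-p^2n/(8k^2)}=e^{-\gamma n}$.

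The main obstacle is minor. It is to confirm that the random vector hits $\pm s$ with a probability bounded below uniformly over all possible tuples in a group. This holds trivially, since $p=2^{1-k}$ with $k$ depending only on $H$. The constants are of course doubly exponential in $H$; the theorem claims no quantitative form, so this is acceptable.
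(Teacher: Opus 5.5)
Your argument is the paper's proof: the same groups from Theorem~\ref{thm:finite}, the same antipodal pair $\pm s$ from Lemma~\ref{lem:signs} hit with probability $p=2^{1-k}$, and the same observation that $E_\rho(P;x,Y)\ge Z$ holds simultaneously for all $P$. Your use of Hoeffding ($e^{-p^2q/2}$) instead of the paper's exponential Markov bound ($e^{-pq/8}$) only costs a factor $p$ in $\gamma$, which is harmless for this existence statement.

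There is one arithmetic slip. For $\alpha=1/(2k)$ one has $\frac nk-\frac{(1+\alpha)n}{k+1}=\frac{n}{2k(k+1)}<\frac n{2k^2}$, so the middle inequality in your display is false as written, and it does not follow ``by the same arithmetic'' as Theorem~\ref{thm:main}. The conclusion $q\ge n/(2k^2)-1$ is nevertheless true if you use the sharper form $kq\ge n-(k-1)\lceil D/(k+1)\rceil$ of \eqref{eq:q}, which gives $q\ge\frac{(3k+1)n}{2k^2(k+1)}-\frac{k-1}{k}$. The paper likewise needs this sharper form for its own choice $\alpha=1/(k-1)$.
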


The constants depend only on $C$ and $\rho$, so the probability estimate
is uniform in the fixed node configuration. The quantifiers do not
assert that a single realization of $Y$ works for every configuration
simultaneously. Explicit choices of the constants are given in
\eqref{eq:randomconstants}. The degree surplus and the forced error
proportion may also be varied separately; see
Theorem~\ref{thm:finite} and Corollary~\ref{cor:tradeoff}.

For each complete group, Lemma~\ref{lem:signs} provides an antipodal
pair of sign patterns, either of which forces an error for every
admissible polynomial. The groups are disjoint, so the number of such
patterns that occur under random labeling is a sum of independent
Bernoulli variables. No discretization of the polynomial class is needed.

\begin{proposition}[A finite probability bound]\label{prop:random}
Use the groups in the proof of Theorem~\ref{thm:finite}, and let $q$ be
their number. Put $p=2^{1-k}$. For independent uniform signs
$Y\in\sgnset^n$, with probability at least $1-e^{-pq/8}$ every polynomial
with $\deg P\le D$ and $\snorm{P}\le C$ satisfies
\begin{equation}\label{eq:randomfinite}
 E_\rho(P;x,Y)\ge\frac{pq}{2}.
\end{equation}
\end{proposition}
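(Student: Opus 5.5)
We follow the proof of Theorem~\ref{thm:finite}, using the same angular variables $t_i=(D/\pi)\arccos x_i$ and the same $q$ disjoint groups $G_1,\ldots,G_q$ of size $k$, each contained in an interval of length at most $k+1$. For each group $G_m$, Lemma~\ref{lem:signs} supplies a sign pattern $s^{(m)}\in\sgnset^k$ such that both $s^{(m)}$ and $-s^{(m)}$ force an error larger than $\rho$ at some index of $G_m$, for every $f\in\B_\pi$ with $\norm{f}_\infty\le C$. The patterns $s^{(m)}$ depend only on the nodes, not on $Y$. Let $Z_m$ be the indicator of the event $Y|_{G_m}\in\{s^{(m)},-s^{(m)}\}$. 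The patterns $s^{(m)}$ and $-s^{(m)}$ are distinct because $k\ge2$. Since the coordinates of $Y$ are independent and uniform, each $Z_m$ is Bernoulli with parameter exactly $2\cdot2^{-k}=p$. The groups are disjoint, so $Z_1,\ldots,Z_q$ are independent.

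The deterministic step comes next. Fix any realization of $Y$ and any polynomial with $\deg P\le D$ and $\snorm{P}\le C$. As in Theorem~\ref{thm:finite}, the function $f(z)=P(\cos(\pi z/D))$ lies in $\B_\pi$ with $\norm{f}_\infty\le C$ and satisfies $f(t_i)=P(x_i)$. On each group with $Z_m=1$, the labels coincide with $\pm s^{(m)}$, so by \eqref{eq:localbad} there is an index in $G_m$ with $|P(x_i)-Y_i|>\rho$. Disjointness then gives $E_\rho(P;x,Y)\ge S:=\sum_m Z_m$ simultaneously for all admissible $P$. Because the bound holds for every $P$ at once, no union bound or discretization over the polynomial class is needed.

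It remains to bound the probability that $S<pq/2$. We have $\mathbb E S=pq$, and the multiplicative Chernoff bound $\mathbb P\{S\le(1-\delta)\mu\}\le e^{-\delta^2\mu/2}$ with $\delta=1/2$ gives $\mathbb P\{S<pq/2\}\le e^{-pq/8}$. On the complement, $E_\rho(P;x,Y)\ge S\ge pq/2$ for every admissible $P$, which is \eqref{eq:randomfinite}. The edge case $q=0$ is trivial, since then the bound reads $E_\rho\ge0$ with probability at least $1-e^0=0$.

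The only point requiring care is the independence and the exact value of $p$. The groups must be fixed before the signs are drawn, and they are, because they depend only on $x$, $D$, and $k$. The antipodal pair must be genuinely two distinct patterns, and this holds since $k\ge2$. The rest is routine.
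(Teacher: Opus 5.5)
Your proposal is correct and follows the paper's proof essentially step for step. It uses the same antipodal-pair indicators on the disjoint groups, the same observation that $E_\rho(P;x,Y)\ge\sum_m Z_m$ holds simultaneously for all admissible $P$, and the same treatment of the case $q=0$. The only difference is that you cite the multiplicative Chernoff bound with $\delta=1/2$, whereas the paper derives the same $e^{-pq/8}$ tail directly from the exponential Markov inequality with $s=\log 2$. As a trivial aside, $s^{(m)}\ne -s^{(m)}$ holds for every $k\ge1$, so the appeal to $k\ge2$ is not needed.
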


\begin{proof}
Fix a bad vector $s_j$ in the $j$th complete group. Let $X_j$ indicate
that the random labels in this group equal $s_j$ or $-s_j$. The two
vectors are distinct, so $\mathbb P(X_j=1)=2/2^k=p$. The variables
$X_j$ are independent. For every realization and every admissible $P$,
Lemma~\ref{lem:signs} gives
\[
 E_\rho(P;x,Y)\ge X,\qquad X=\sum_{j=1}^q X_j.
\]
This inequality already holds simultaneously over all the polynomials.
For completeness, with $s=\log2$, the exponential Markov inequality gives
\begin{align*}
 \mathbb P\{X<pq/2\}
 &\le e^{spq/2}(1-p+pe^{-s})^q\\
 &\le\exp\bigl[pq(s/2+e^{-s}-1)\bigr]
 \le e^{-pq/8}.
\end{align*}
If $q=0$, the asserted lower probability bound is zero and the statement
is immediate. This proves the proposition in all cases.
\end{proof}

\begin{proof}[Proof of Theorem~\ref{thm:randomintro}]
For the same $k$, take
\begin{equation}\label{eq:randomconstants}
 \alpha=\frac1{k-1},\qquad
 \beta=\frac{2^{1-k}}{8k(k+1)},\qquad
 \gamma=\frac{2^{1-k}}{16k(k+1)},\qquad
 n_1=2(k^2-1).
\end{equation}
If $D=(1+\alpha)n$ and $n\ge n_1$, \eqref{eq:q} implies
\[
 q\ge\frac{n}{k(k+1)}-\frac{k-1}{k}
 \ge\frac{n}{2k(k+1)}.
\]
On the event in Proposition~\ref{prop:random},
$E_\rho\ge pq/2\ge2\beta n>\beta n$, and $pq/8\ge\gamma n$.
\end{proof}

In particular, at least $(1-e^{-\gamma n})2^n$ sign vectors satisfy
the conclusion. Writing $H=C/(1-\rho)$, the displayed choices satisfy
\[
 \alpha\ge e^{-A(1+H)},\qquad n_1\le e^{A(1+H)}
\]
for an absolute $A>0$. In contrast, the factors $2^{1-k}$ in $\beta$
and $\gamma$ may be doubly exponentially small in $H$. In particular,
a high probability in~\eqref{eq:randomintro} may require $n$ much larger
than the deterministic threshold. These probability parameters are not
asserted to be optimal.

\subsection{Stability under angular perturbations}\label{sec:stability}

The angular coordinate used above also gives stability on the natural
inverse-degree scale. Write
\[
 d_{\mathrm{ang}}(x,z)=|\arccos x-\arccos z|
 \qquad(x,z\in[-1,1]).
\]

\begin{proposition}[Uniform angular stability]\label{prop:stability}
Let $D>0$, $C>0$, and $0\le\rho<\rho'<1$. Suppose a sign vector $y$
at nodes $x$ has the property that every $P$ with
$\deg P\le D$, $\snorm{P}\le C$ has at least $q$ errors greater than
$\rho'$. Then the same vector has at least $q$ errors greater than
$\rho$ at every node array $z$ satisfying
\begin{equation}\label{eq:perturb}
 \max_i d_{\mathrm{ang}}(x_i,z_i)
 \le\frac{\rho'-\rho}{eCD}.
\end{equation}
The conclusion is simultaneous over all such arrays $z$ and all
admissible polynomials.
\end{proposition}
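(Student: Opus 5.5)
The plan is to pass to the angular variable and use the derivative bound \eqref{eq:derivative}, in the form of a Lipschitz estimate for $\theta\mapsto P(\cos\theta)$. Fix a polynomial $P$ with $\deg P\le D$ and $\snorm{P}\le C$, and put
\[
 f(\theta)=P(\cos\theta).
\]
If $P$ is constant the claim is trivial, since then $P(z_i)=P(x_i)$ for every $i$. Otherwise $f$ is a trigonometric polynomial, hence an entire function of exponential type at most $\deg P\le D$. It is bounded on $\R$ with $\norm{f}_\infty=\snorm{P}\le C$, so $f\in\B_D$. By \eqref{eq:derivative},
\[
 \norm{f'}_\infty\le eDC.
\]
One could also use Bernstein's inequality, which gives the sharper constant $DC$, but the paper's estimate suffices and matches the constant in \eqref{eq:perturb}.

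Next, for each index $i$, write $\theta_i=\arccos x_i$ and $\phi_i=\arccos z_i$. Then $P(x_i)=f(\theta_i)$ and $P(z_i)=f(\phi_i)$. By the mean value theorem, applied to the real and imaginary parts or via the integral of $f'$,
\[
 |P(z_i)-P(x_i)|\le eCD\,|\theta_i-\phi_i|\le\rho'-\rho,
\]
using \eqref{eq:perturb}.

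To transfer the errors, let $I$ be the set of indices with $|P(x_i)-y_i|>\rho'$. By hypothesis $\#I\ge q$. For $i\in I$, the triangle inequality gives
\[
 |P(z_i)-y_i|\ge|P(x_i)-y_i|-|P(z_i)-P(x_i)|>\rho'-(\rho'-\rho)=\rho,
\]
so $E_\rho(P;z,y)\ge q$.

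Simultaneity is automatic. The bound on $|P(z_i)-P(x_i)|$ depends only on the norm and degree bounds and on \eqref{eq:perturb}, not on any choice tied to $z$ or $P$. The sign vector $y$ is fixed in advance, so the conclusion holds for all admissible $z$ and $P$ at once.

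The only delicate point is justifying that $f$ has type at most $D$ when $D$ is nonintegral, or when $\deg P<D$. This holds because the type of $f$ is $\deg P\le\lfloor D\rfloor\le D$, and membership in $\B_{\deg P}$ implies membership in $\B_D$. Everything else is routine.
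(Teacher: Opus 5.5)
Your proof is correct and is essentially the paper's argument: regard $P(\cos\theta)$ as an element of $\B_D$ of norm at most $C$, apply \eqref{eq:derivative} to get $|P(x_i)-P(z_i)|\le eCD\,d_{\mathrm{ang}}(x_i,z_i)\le\rho'-\rho$, and use the triangle inequality to turn each error above $\rho'$ at $x_i$ into an error above $\rho$ at $z_i$. One small caution: applying the mean value theorem separately to the real and imaginary parts of the complex-valued $f$ only gives the constant $\sqrt2\,eCD$, so use the integral identity $f(\phi_i)-f(\theta_i)=\int_{\theta_i}^{\phi_i}f'(t)\,dt$ to obtain exactly the constant in \eqref{eq:perturb}.
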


\begin{proof}
For a fixed admissible $P$, the entire function
$Q(\theta)=P(\cos\theta)$ belongs to $\B_D$ and has norm at most $C$.
By \eqref{eq:derivative},
\[
 |P(x_i)-P(z_i)|\le eCD\,d_{\mathrm{ang}}(x_i,z_i)
 \le\rho'-\rho.
\]
Each index at which $|P(x_i)-y_i|>\rho'$ therefore satisfies
$|P(z_i)-y_i|>\rho$. The estimate is uniform in both $P$ and $z$.
\end{proof}

\begin{corollary}[Random signs stable under perturbation]\label{cor:randomstable}
Fix $C>0$ and $0\le\rho<1$. There are
$\alpha,\beta,\gamma,h>0$ and $n_1$ such that, for each fixed
$x\in[-1,1]^n$ with $n\ge n_1$, the following holds with probability at
least $1-e^{-\gamma n}$ over uniform signs $Y$:
\[
 E_\rho(P;z,Y)>\beta n
\]
simultaneously for all $P$ and $z\in[-1,1]^n$ satisfying
\[
 \deg P\le(1+\alpha)n,\qquad \snorm{P}\le C,\qquad
 \max_i d_{\mathrm{ang}}(x_i,z_i)\le h/n.
\]
One may take the constants in \eqref{eq:randomconstants} for tolerance
$\rho'=(1+\rho)/2$, and
\[
 h=\frac{1-\rho}{2eC(1+\alpha)}.
\]
\end{corollary}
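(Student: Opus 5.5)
The plan is to combine Theorem~\ref{thm:randomintro}, applied at the intermediate tolerance $\rho'=(1+\rho)/2$, with Proposition~\ref{prop:stability}, which passes from $\rho'$ back to $\rho$.

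\emph{Choice of $k$ and constants.} Since $1-\rho'=(1-\rho)/2$, the relevant ratio is $C/(1-\rho')=2H$. Choose $k$ from Lemma~\ref{lem:signs} for $C$ and $\rho'$. Then let $\alpha,\beta,\gamma,n_1$ be as in \eqref{eq:randomconstants} for this $k$. Put $D=(1+\alpha)n$.

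\emph{The random event.} By Proposition~\ref{prop:random} and the proof of Theorem~\ref{thm:randomintro}, with probability at least $1-e^{-\gamma n}$ the signs $Y$ satisfy the following at the fixed nodes $x$: every $P$ with $\deg P\le D$ and $\snorm{P}\le C$ has
\[
 E_{\rho'}(P;x,Y)\ge pq/2\ge 2\beta n .
\]
Here the grouping is determined by $x$ and $D$ alone, so $Y$ is drawn after $x$ is fixed. This is consistent with the quantifier order in the corollary.

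\emph{Perturbation.} On this event, apply Proposition~\ref{prop:stability} with $\rho<\rho'$ and $q=\lceil 2\beta n\rceil$. It yields at least $2\beta n>\beta n$ errors greater than $\rho$, simultaneously for every array $z$ with
\[
 \max_i d_{\mathrm{ang}}(x_i,z_i)\le\frac{\rho'-\rho}{eCD}.
\]
Since $\rho'-\rho=(1-\rho)/2$, this radius equals
\[
 \frac{1-\rho}{2eC(1+\alpha)n}=\frac hn .
\]
So the admissible perturbations are exactly those with $\max_i d_{\mathrm{ang}}(x_i,z_i)\le h/n$, as claimed. The perturbed bound is deterministic given the event, so it holds for all $P$ and all $z$ at once and needs no union bound.

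\emph{Main point to check.} The only subtle step is confirming that the random event is stated uniformly over all admissible polynomials before any perturbation is applied. Proposition~\ref{prop:random} does give this, through the pointwise inequality $E\ge X$, and everything else is direct substitution.
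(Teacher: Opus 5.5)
Your proposal is correct and follows the paper's proof: apply Theorem~\ref{thm:randomintro} at tolerance $\rho'=(1+\rho)/2$, with $k$ chosen for $C/(1-\rho')$, and then Proposition~\ref{prop:stability} with $D=(1+\alpha)n$, whose radius $(\rho'-\rho)/(eCD)$ is exactly $h/n$. You also check the right subtle point, and handle it correctly: the event from Proposition~\ref{prop:random} is already uniform over all admissible polynomials, so no union bound over $z$ is needed.
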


\begin{proof}
Apply Theorem~\ref{thm:randomintro} at tolerance $\rho'$ and then
Proposition~\ref{prop:stability}.
\end{proof}

\begin{remark}[Empirical error bounds]\label{rem:empirical}
If $0<\rho<1$, $\beta>0$, and $E_\rho(P;x,y)>\beta n$, then, for every $r>0$,
\[
 \left(\frac1n\sum_{i=1}^n|P(x_i)-y_i|^r\right)^{1/r}
 >\rho\,\beta^{1/r},
\]
since more than $\beta n$ summands exceed $\rho^r$.
Thus the deterministic and random-sign obstructions also give uniform
lower bounds for empirical $\ell^r$ errors.
\end{remark}

\section{Concluding remarks}\label{sec:remarks}

The elementary parameter ranges are easily described. If $H<1$, then
$1-C>\rho$, so every polynomial of norm at most $C$ has error greater
than $\rho$ at every sign sample. In this case,
$\epsilon_* = \epsilon_{\mathrm{Ch}}=1$. If $H\ge1$, the constants
$1-\rho$ and $-(1-\rho)$ have norm at most $C$; one of them fits at
least half of any sign vector to tolerance $\rho$. Hence
$\epsilon_*\le\epsilon_{\mathrm{Ch}}\le1/2$. Finally, a tolerance
$\rho\ge1$ would make the zero polynomial fit every sign sample, so
the restriction $\rho<1$ is necessary.

Theorem~\ref{thm:sharp} identifies the leading exponential coefficient
for the full Chebyshev--Lobatto grids. The corresponding question for
arbitrary nodes remains open: our estimates do not establish the
existence or value of a limit of
$H^{-1}\log(1/\epsilon_*(C,\rho))$ as $H\to\infty$.
Even on the Chebyshev grids, the present bounds leave a factor of
order $H+1$ between the upper and lower estimates, up to absolute
constants. They do not determine an asymptotic prefactor.

The optimal threshold in $n$ and the best relation between degree
surplus and forced error proportion are also unresolved. In particular,
the period-averaging argument in Section~\ref{sec:sharp} gives no
explicit threshold. For random signs, the proof counts only one
antipodal pair of forbidden patterns in each group. Bounds for a larger
collection of forbidden patterns could improve the forced error
fraction and the probability exponent.

\appendix
\section{The localized cardinal estimate}\label{sec:cardinalappendix}

This appendix proves Proposition~\ref{prop:chebyshev}.

\begin{proof}
Use the $2m$ equally spaced angles $\theta_j=\pi j/m$ and extend the
data evenly and periodically, so that $v_j=y_j$ for $0\le j\le m$ and
$v_{2m-j}=y_j$ for $1\le j<m$. Define the even trigonometric polynomials
\begin{align*}
 H_m(t)&=\frac{1+2\sum_{a=1}^{m-1}\cos(at)+\cos(mt)}{2m},\\
 W_r(t)&=\left(\frac{\sin((r+1)t/2)}{(r+1)\sin(t/2)}\right)^2,
 \qquad K_{m,r}(t)=H_m(t)W_r(t),
\end{align*}
with removable singularities filled in. Their degrees are $m$, $r$,
and at most $m+r$, respectively. The identity
\begin{equation}\label{eq:cardinalidentity}
 H_m(t)=\frac{\sin(mt)\cot(t/2)}{2m}
\end{equation}
away from the removable points shows that
$K_{m,r}(\theta_j)=\mathbf1_{\{j=0\}}$ for $0\le j<2m$.
Consequently,
\[
 T(t)=\sum_{j=0}^{2m-1}v_jK_{m,r}(t-\theta_j)
\]
interpolates the extended data. It is even, since both the kernel and the
data are even on the circle. Hence $T(t)=P(\cos t)$ for an algebraic
polynomial $P$ of degree at most $m+r$.

It remains to bound the norm. Let $d(t)$ be the distance from $t$ to
$2\pi\mathbb Z$, so $0\le d(t)\le\pi$. From the Fourier formula for
$H_m$, whose coefficients have total absolute value one, and from
\eqref{eq:cardinalidentity}, using $\cot(d/2)\le2/d$,
\[
 |H_m(t)|\le\min\left\{1,\frac{1}{m\,d(t)}\right\}.
\]
For real $t$, the geometric-sum identity gives
\[
 W_r(t)=\left|\frac1{r+1}\sum_{a=0}^{r}e^{iat}\right|^2\le1.
\]
Together with $\sin(d/2)\ge d/\pi$ for $0\le d\le\pi$, this yields
\[
 |W_r(t)|\le\min\left\{1,
       \frac{\pi^2}{(r+1)^2d(t)^2}\right\}.
\]
The bounds at $d=0$ are interpreted by continuity. Put $h=\pi/m$ and
$A=m/(r+1)\ge1$. For fixed $t$, each distance shell
$\ell h\le d(t-\theta_j)<(\ell+1)h$, $0\le\ell\le m-1$,
contains at most two grid points: each of its two half-open arcs has
length $h$, the grid spacing. A grid point at distance $\pi$, if one
exists, contributes nothing because $H_m(\pi)=0$. The
shell with $\ell=0$ contributes at most two. It follows that
\begin{equation}\label{eq:lebesguesum}
 \sum_{j=0}^{2m-1}|K_{m,r}(t-\theta_j)|
 \le2+\frac2\pi\sum_{\ell=1}^\infty\frac1\ell
                     \min\left\{1,\frac{A^2}{\ell^2}\right\}.
\end{equation}
The function $g(u)=u^{-1}\min\{1,A^2/u^2\}$ is decreasing on
$[1,\infty)$. Since $A\ge1$,
\[
 \sum_{\ell=1}^\infty g(\ell)
 \le g(1)+\int_1^\infty g(u)\,du
 =1+\log A+\frac12.
\]
Thus \eqref{eq:lebesguesum} is at most
$2+3/\pi+(2/\pi)\log A<3+(2/\pi)\log A$, proving
\eqref{eq:chebbound} because $|v_j|\le1$.
\end{proof}

\section{A qualitative finite obstruction by localization}\label{sec:appendix}

We give an alternative proof of the qualitative finite obstruction,
using Beurling's strict interpolation-density theorem in place of
Theorem~\ref{thm:OU}. Unlike the stationary extraction in
\cite{Ulam2026}, the argument constructs a single infinite interpolation
set by dilating and localizing finite blocks. No estimate for the block
size is obtained. Its parity is irrelevant to the grouping argument.

For a uniformly separated set $\Lambda\subset\R$, write
\[
 D^+(\Lambda)=\limsup_{R\to\infty}\sup_{a\in\R}
       \frac{\#(\Lambda\cap[a,a+R])}{R}.
\]
The necessary part of Beurling's interpolation theorem states that, if
every bounded complex data function on $\Lambda$ is the restriction of
some member of $\B_\pi$, then $D^+(\Lambda)<1$; see
\cite{Beurling1989,OrtegaCerdaSeip1999}. We will obtain a uniform
interpolation bound in the construction below, so uniform separation
will follow directly from \eqref{eq:derivative}.

We first record the successive-correction argument used below.

\begin{lemma}[Successive correction]\label{lem:correction}
Fix $\sigma>0$, and let $\Lambda=(\lambda_i)_{i\in I}$ be a finite or
infinite indexed family of real nodes. Write
$f|_\Lambda=(f(\lambda_i))_{i\in I}$.
Suppose $M>0$, $0\le\theta<1$, and every
$v\in\ell^\infty(I;\R)$ admits $f\in\B_\sigma^{\R}$ satisfying
\[
 \norm{f}_\infty\le M\norm{v}_\infty,
 \qquad \norm{v-f|_\Lambda}_\infty\le\theta\norm{v}_\infty.
\]
Then every such $v$ has an exact real interpolant of norm at most
$M\norm{v}_\infty/(1-\theta)$.
\end{lemma}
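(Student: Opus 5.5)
The plan is a Neumann-series (successive correction) argument: iterate the approximate interpolation on the residuals and sum the resulting geometric series in $\B_\sigma^{\R}$. We may assume $v\ne0$, since for $v=0$ the zero function is an exact interpolant. Put $v_0=v$. Given $v_j\in\ell^\infty(I;\R)$, the hypothesis supplies $f_j\in\B_\sigma^{\R}$ with
\[
 \norm{f_j}_\infty\le M\norm{v_j}_\infty,\qquad
 \norm{v_j-f_j|_\Lambda}_\infty\le\theta\norm{v_j}_\infty.
\]
We then set $v_{j+1}=v_j-f_j|_\Lambda$. This residual is again a bounded real family, because $f_j$ is real on $\R$ and bounded. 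By induction, $\norm{v_j}_\infty\le\theta^j\norm{v}_\infty$ and $\norm{f_j}_\infty\le M\theta^j\norm{v}_\infty$.

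Next we sum the series $f=\sum_{j\ge0}f_j$. Its Bernstein norms are summable with total at most $M\norm{v}_\infty/(1-\theta)$. By the completeness of $\B_\sigma$ recorded in Section~\ref{sec:bernstein}, the series converges in $\B_\sigma$ and locally uniformly in $\C$. The limit is real on $\R$, so $f\in\B_\sigma^{\R}$, and the norm bound passes to the limit by the triangle inequality.

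To verify that $f$ interpolates exactly, telescope: $v-\sum_{j<J}f_j|_\Lambda=v_J$ for every $J$. Evaluating at each node $\lambda_i$ and letting $J\to\infty$ (pointwise convergence suffices) gives $v_i-f(\lambda_i)=\lim_J (v_J)_i=0$, since $\norm{v_J}_\infty\le\theta^J\norm{v}_\infty\to0$. Repeated nodes cause no difficulty, because every statement is indexwise.

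The only delicate point is the convergence step: the limit must remain in $\B_\sigma$ with the same type $\sigma$. This is exactly the stated completeness of $\B_\sigma$. Alternatively, the uniform bound $|f_j(z)|\le\norm{f_j}_\infty e^{\sigma|\operatorname{Im}z|}$ from \eqref{eq:growth} gives locally uniform convergence to an entire function satisfying the same bound, and hence of type at most $\sigma$.
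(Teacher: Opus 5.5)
Your proposal is correct and follows the paper's proof essentially step for step. It uses the same residual recursion $r_{j+1}=r_j-f_j|_\Lambda$ with geometric bounds, and the same summation of $\sum_j f_j$ via completeness of $\B_\sigma$ and the growth estimate \eqref{eq:growth}; your telescoping check of the samples spells out what the paper leaves implicit.
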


\begin{proof}
Set $r_0=v$. Successively choose $f_j$ for $r_j$ and put
$r_{j+1}=r_j-f_j|_\Lambda$. Then
\[
 \norm{r_j}_\infty\le\theta^j\norm{v}_\infty,
 \qquad
 \norm{f_j}_\infty\le M\theta^j\norm{v}_\infty.
\]
If a residual vanishes, take all subsequent terms to be zero. Completeness
and \eqref{eq:growth} show that $f=\sum_{j\ge0}f_j$ belongs to
$\B_\sigma^{\R}$, has the stated norm bound, and has samples equal to $v$.
\end{proof}

\begin{proposition}\label{prop:qualitative}
For every $H>0$, some integer $k\ge2$ has the following property:
every indexed $k$-tuple in an interval of length $k+1$ admits real unit
data with no norm-$H$ interpolant in $\B_\pi$.
\end{proposition}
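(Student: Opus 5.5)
We argue by contradiction, following the localization scheme announced at the start of Appendix~\ref{sec:appendix}. Suppose that for some $H>0$ and every $k\ge2$ there is an indexed $k$-tuple $T_k$ in an interval $I_k$ of length $k+1$ such that every real data vector in $[-1,1]^{k}$ has an interpolant in $\B_\pi^{\R}$ of norm at most $H$. Coinciding nodes are then impossible: the data $1,-1$ at a repeated location would have no interpolant. Moreover, since $H$ controls the derivative through \eqref{eq:derivative}, the same argument gives a uniform separation $\delta=1/(eH\pi)$ between nodes of each $T_k$: if two nodes were closer than $\delta$, the data $\pm1$ at those two nodes would force $\norm{f'}_\infty>2/\delta$. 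Each $T_k$ has density $k/(k+1)$ on its interval, which tends to one. The goal is to glue dilated copies of these blocks into one infinite set $\Lambda$ with $D^+(\Lambda)\ge1$ on which every bounded datum has a $\B_\pi$ interpolant. This contradicts Beurling's necessary density condition.

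The construction runs as follows. First fix $\eta>0$ small and dilate. The rescaled set $T_k'=(1+\eta)T_k$ in an interval of length $(1+\eta)(k+1)$ has data interpolated by $f((\cdot)/(1+\eta))\in\B_{\pi/(1+\eta)}$, with the same norm $H$. This gains a spectral gap $\pi\eta/(1+\eta)$. Next choose a localizing function $\phi\in\B_{\pi\eta/(1+\eta)}$ with $\phi(0)=1$ and rapid decay, for example a power of a sinc, so that $|\phi(t)|\le c_N(1+|t|/L)^{-N}$ at a scale $L$ depending only on $\eta$. Place copies $T'_{k}+a_m$ along the real line, with gaps between consecutive blocks of bounded size $g$ and block length $\ell=(1+\eta)(k+1)$. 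Take $\Lambda$ to be their union, which is uniformly separated by $\delta$.

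Given real data $v$ with $\norm{v}_\infty\le1$, let $f_m$ interpolate $v$ on the $m$th block with $\norm{f_m}\le H$. Set
\[
F(t)=\sum_m f_m(t)\,\phi\bigl((t-c_m)/\kappa\bigr),
\]
where $c_m$ is the block center and $\kappa$ is a large spreading parameter. Two requirements must be met, and this is the main obstacle. First, the cutoff must be close to $1$ across an entire block, of length about $k$. Second, it must decay enough that the neighbouring contributions summed over all other blocks are small. Moreover, the type of the product must stay at most $\pi$, which we enforce by taking $\phi$ of type $\pi\eta/(1+\eta)$ and dilating by $\kappa\ge1$ (dilation only lowers the type).

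These requirements pull against each other: making $\phi((t-c_m)/\kappa)$ nearly $1$ on a window of length $\ell$ needs $\kappa\gg\ell$, but then the tails reach other blocks. I would resolve this by sparsifying instead of dilating the cutoff. Fix $k$ huge. Use a cutoff adapted to a window of length $\ell$, meaning $\kappa$ is comparable to $\ell$ times a constant so that the cutoff is $\ge 1-\theta/2$ on the block. Then space successive blocks only $g=o(\ell)$ apart. The key point is that tail decay is measured in units of $\kappa\sim\ell$, so the sum of contributions from other blocks is of order $\sum_{m\ne0}(1+|m|)^{-N}$ times a constant. This does not shrink with $k$, and it is the delicate point. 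The plan is to accept an approximation error $\theta<1$ rather than a small one: choose the profile $\phi$ with $\phi\ge1-\theta/4$ on $[-1/2,1/2]$ and with $\sum_{m\ne0}\sup_{|s|\le1/2}|\phi(s+m)|\le\theta/(4H)$. Such a $\phi$ exists for fixed $\eta$ if we first use wider spacing of blocks (gap $g=\gamma\ell$ with small fixed $\gamma$) together with a smooth bump of type proportional to $\eta/\ell$ after rescaling.

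With this choice the residual satisfies $\norm{v-F|_\Lambda}_\infty\le\theta$ and $\norm{F}_\infty\le M(H,\eta)$. Lemma~\ref{lem:correction} then yields exact interpolation on $\Lambda$ in $\B_\pi$. Complex data are handled by splitting into real and imaginary parts. Finally, $D^+(\Lambda)\ge k/\bigl((1+\eta)(k+1)(1+\gamma)\bigr)$. If $\eta$ and $\gamma$ are fixed small, this is not $\ge1$, so the argument has to be refined: let the blocks vary, using $T_{k_m}$ with $k_m\to\infty$ and $\eta_m,\gamma_m\to0$ along the line. Since $D^+$ is a $\limsup$ over windows, increasingly dense far regions give $D^+(\Lambda)=1$. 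The uniform bound $M$ is preserved because each localization error is controlled relative to its own block scale. Beurling's theorem then gives the contradiction.
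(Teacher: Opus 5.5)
Your overall architecture matches the paper's: dilate the blocks to open a spectral gap, multiply by windows that use the gap, glue, apply Lemma~\ref{lem:correction}, and contradict Beurling's density theorem. The contradiction setup, the separation bound and the final density step are fine. The genuine gap is the localization step. You identify it as the main obstacle but settle it only by assertion, and as you set it up it cannot work.

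Your cutoff for block $m$ is $\psi_m(t)=\phi_m((t-c_m)/\kappa_m)$ with $\phi_m\in\B_{\pi\eta_m/(1+\eta_m)}$ and $\kappa_m\gtrsim\ell_m$. Its type is therefore only of order $\eta_m/\ell_m$, a $1/\ell_m$ fraction of the available gap. It must pass from at least $1-\theta/4$ on its own block to at most $\theta/(4H)$ on the neighbouring block, at distance $g_m\le\gamma\ell_m$. Hence $\norm{\psi_m'}_\infty\ge1/(2g_m)$, and \eqref{eq:derivative} gives
$\norm{\psi_m}_\infty\gtrsim\kappa_m/(\eta_m g_m)\gtrsim1/(\eta_m\gamma)$.
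Density one forces $\eta_m\to0$, so these norms are unbounded. The only available estimate, $\norm{F}_\infty\le H\sup_t\sum_m|\psi_m(t)|$, then yields no uniform $M$, and your closing claim that ``the uniform bound $M$ is preserved'' fails.

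Even the fixed-$\eta$ profile you assert to exist does not exist once $H$ is large. In block units it has type $O(\eta)$ and must be at most $\theta/(4H)$ on the union of the other blocks, which is a relatively dense set. A Logvinenko--Sereda type sampling inequality then bounds $\norm{\phi}_\infty$ by an absolute multiple of $\theta/(4H)$, contradicting $\phi(0)\ge1-\theta/4$. Your remark that the tails ``do not shrink with $k$'' is exactly this symptom.

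The missing idea is to exploit the observation you make only at the end. Because $D^+$ is a $\limsup$ of window densities, density one needs only single long blocks with dilation factor tending to $1$. The blocks may therefore be placed as far apart as you like, and nothing needs to be packed.

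The paper uses $\Gamma_j=T_j+a_j\Lambda_j$ with $a_j=1+(j+1)^{-2}$, and the window $w_j=\sinc^2(b_j(z-T_j))$ of type $2b_j=\pi-\pi/a_j$, which is at least $1/2$ on $\Gamma_j$. It interpolates $v/w_j$ on $\Gamma_j$ by some $g_j$ with $\norm{g_j}_\infty\le2H$. Then $w_jg_j=v$ exactly on $\Gamma_j$, so the window never needs to be close to $1$. The translations $T_j$ are chosen so sparse that the other windows sum to at most $\eta=1/(8H)$ on each block. This gives $\norm{F}_\infty\le2H(1+\eta)$ and residual at most $1/4$, after which Lemma~\ref{lem:correction}, separation via \eqref{eq:derivative}, and Beurling's theorem finish.

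If you prefer to keep packed blocks, you must instead give each cutoff type of order $\eta_m$ in absolute units, for instance an indicator smoothed by a Fej\'er kernel. Then in block units its type grows like $\eta_mk_m$ and its tails vanish as $k_m\to\infty$. You would still have to choose $k_m$ growing fast relative to $\eta_m^{-1}$ and $\gamma_m^{-1}$, and control interactions between blocks of different sizes; none of this is in the proposal.
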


\begin{proof}
For $H<1$, constant unit data already give the assertion. Assume $H\ge1$
and suppose the assertion is false. For every integer $j\ge2$, choose
a set
\[
 \Lambda_j\subset[-(j+1)/2,(j+1)/2],\qquad\#\Lambda_j=j,
\]
on which every real unit data vector has an interpolant in
$\B_\pi^{\R}$ of norm at most $H$. The nodes must be distinct, since
otherwise conflicting real data could not be interpolated.

\smallskip
\noindent\emph{Step 1: dilation and localization.}
Set
\[
 a_j=1+\frac1{(j+1)^2},\qquad
 \sigma_j=\frac\pi{a_j},\qquad
 b_j=\frac{\pi-\sigma_j}{2},\qquad
 \eta=\frac1{8H}.
\]
We will choose large positive translations $T_j$ and put
\[
 \Gamma_j=T_j+a_j\Lambda_j,\qquad
 w_j(z)=\sinc^2\bigl(b_j(z-T_j)\bigr),
 \qquad \sinc z=\frac{\sin z}{z},\quad\sinc0=1.
\]
For $t\in\Gamma_j$,
\[
 |b_j(t-T_j)|\le\frac\pi{4(j+1)}\le\frac\pi{12},
 \qquad w_j(t)\ge\frac12.
\]
The last inequality follows, for instance, from
$\sin u/u\ge1-u^2/6$ in this range.

Define, on the real line,
\[
 u_j(x)=\min\left\{1,\frac1{b_j^2|x-T_j|^2}\right\},
\]
where the value at $x=T_j$ is one. The integral identity
$\sinc z=\int_0^1\cos(tz)\,dt$ and the elementary bound
$|\sin z|\le e^{|\operatorname{Im}z|}$ give
\begin{equation}\label{eq:windowgrowth}
 |w_j(x+iy)|\le e^{2b_j|y|}u_j(x).
\end{equation}
Choose
\[
 L_j>\max\left\{\frac{a_j(j+1)}2,
                \frac1{b_j}\sqrt{\frac{2^j}{\eta}}\right\}.
\]
Choose $T_2=L_2+1$ and, successively,
$T_{j+1}=T_j+L_j+L_{j+1}+1$. Then the intervals
$J_j=[T_j-L_j,T_j+L_j]$ are pairwise disjoint, their left endpoints
tend to infinity, and $\Gamma_j\subset J_j$. For $x\notin J_j$,
$u_j(x)\le\eta2^{-j}$. Since a real $x$ lies in at most one $J_j$,
\begin{equation}\label{eq:sumwindows}
 \sum_{j\ge2}u_j(x)\le1+\eta\qquad(x\in\R).
\end{equation}

\smallskip
\noindent\emph{Step 2: gluing and successive correction.}
Let $v$ be any real unit data function on
$\Gamma=\bigcup_{j\ge2}\Gamma_j$. By dilation of the assumed
interpolation property on $\Lambda_j$, the data $v(t)/w_j(t)$ on
$\Gamma_j$ have an interpolant $g_j\in\B_{\sigma_j}^{\R}$ with
$\norm{g_j}_\infty\le2H$. Consider
\begin{equation}\label{eq:glued}
 F(z)=\sum_{j\ge2}w_j(z)g_j(z).
\end{equation}
Since $2b_j+\sigma_j=\pi$, each summand satisfies
\[
 |w_j(x+iy)g_j(x+iy)|\le2H e^{\pi|y|}u_j(x).
\]
For a fixed compact subset of $\C$, its real projection is disjoint
from $J_j$ for all sufficiently large $j$. The tail is therefore
uniformly dominated there by a constant times $\sum_j\eta2^{-j}$.
Thus \eqref{eq:glued} converges locally uniformly, and
\eqref{eq:sumwindows} gives
\[
 |F(x+iy)|\le2H(1+\eta)e^{\pi|y|}.
\]
In particular, $F\in\B_\pi^{\R}$ and
$\norm{F}_\infty\le2H(1+\eta)$.

At $t\in\Gamma_\ell$, its own summand equals $v(t)$, while all other
windows are small. Hence
\[
 |F(t)-v(t)|\le2H\sum_{j\ne\ell}u_j(t)
 \le2H\eta=\frac14.
\]
By scaling and Lemma~\ref{lem:correction}, every bounded real data
function on $\Gamma$ has an exact interpolant of norm at most
\[
 K\norm{v}_\infty,\qquad K=\frac{8H(1+\eta)}3.
\]
Splitting real and imaginary parts gives bounded complex interpolation
as well, with norm bound $2K\norm{v}_\infty$ for complex data.

\smallskip
\noindent\emph{Step 3: separation and the density contradiction.}
Given distinct $s,t\in\Gamma$, interpolate real data
equal to $1$ at $s$, $-1$ at $t$, and zero elsewhere. Equation
\eqref{eq:derivative} implies
\[
 2\le e\pi K|s-t|.
\]
Thus $\Gamma$ is uniformly separated and Beurling's theorem applies.

On the other hand, $\Gamma_j$ contains $j$ points in an interval of
length $R_j=a_j(j+1)$, and $R_j\to\infty$. Therefore
\[
 D^+(\Gamma)\ge\lim_{j\to\infty}\frac{j}{a_j(j+1)}=1,
\]
contradicting the strict inequality $D^+(\Gamma)<1$.
\end{proof}

Using Proposition~\ref{prop:qualitative} in place of
Lemma~\ref{lem:finite}, the separation and grouping arguments give
the qualitative obstruction and its consequences without
Theorem~\ref{thm:OU}. The latter theorem is needed for the
quantitative arbitrary-node bounds. The sharp result for Chebyshev
grids has the independent proof given in Section~\ref{sec:sharp}.

\end{document}